\def\ge{\geqslant}
\def\le{\leqslant}
\def\a{\alpha}
\def\G{\Gamma}
\def\D{\Delta}
\def\s{\sigma}
\def\t{\tau}
\def\k{\kappa}
\def\l{\lambda}
\def\z{\zeta}
\def\i{^{-1}}
\def\<{\langle}
\def\>{\rangle}
\newcommand{\bG}{\mathbf G}
\newcommand{{\BG}}{\ensuremath{\mathbb {G}}\xspace}
\newcommand{{\BK}}{\ensuremath{\mathbb {K}}\xspace}
\newcommand{\BQ}{\ensuremath{\mathbb {Q}}\xspace}
\newcommand{\BS}{\ensuremath{\mathbb {S}}\xspace}
\newcommand{\BZ}{\ensuremath{\mathbb {Z}}\xspace}
\newcommand{\CF}{\ensuremath{\mathcal {F}}\xspace}
\newcommand{\CI}{\ensuremath{\mathcal {I}}\xspace}
\newcommand{\CJ}{\ensuremath{\mathcal {J}}\xspace}
\newcommand{\wt}{\text{wt}}
\newcommand{\ra}{\rightarrow}
\def\tW{\tilde W}
\DeclareMathOperator{\Adm}{Adm}
\DeclareMathOperator{\Gal}{Gal}
\DeclareMathOperator{\rank}{rank}
\newcommand{\wtd}{\widetilde}
\newtheorem{theorem}{Theorem}
\newtheorem{proposition}[theorem]{Proposition}
\newtheorem{lemma}[theorem]{Lemma}
\theoremstyle{definition}
\newtheorem*{acknowledgement}{Acknowledgement}
\newtheorem{remark}[theorem]{Remark}
\numberwithin{equation}{section}
\numberwithin{theorem}{section}
\renewcommand{\to}{%
   \ifbool{@display}{\longrightarrow}{\rightarrow}%
   }
\let\shortmapsto\mapsto
\renewcommand{\mapsto}{%
   \ifbool{@display}{\longmapsto}{\shortmapsto}%
   }
\newlength{\olen}
\newlength{\ulen}
\newlength{\xlen}
\newcommand{\xra}[2][]{%
   \ifbool{@display}%
      {\settowidth{\olen}{$\overset{#2}{\longrightarrow}$}%
       \settowidth{\ulen}{$\underset{#1}{\longrightarrow}$}%
       \settowidth{\xlen}{$\xrightarrow[#1]{#2}$}%
       \ifdimgreater{\olen}{\xlen}%
          {\underset{#1}{\overset{#2}{\longrightarrow}}}%
          {\ifdimgreater{\ulen}{\xlen}%
             {\underset{#1}{\overset{#2}{\longrightarrow}}}
             {\xrightarrow[#1]{#2}}}}%
      {\xrightarrow[#1]{#2}}
   }
\newcommand{\xyra}[2][]{%
   \settowidth{\xlen}{$\xrightarrow[#1]{#2}$}%
   \ifbool{@display}%
      {\settowidth{\olen}{$\overset{#2}{\longrightarrow}$}%
       \settowidth{\ulen}{$\underset{#1}{\longrightarrow}$}%
       \ifdimgreater{\olen}{\xlen}%
          {\mathrel{\xymatrix@M=.12ex@C=3.2ex{\ar[r]^-{#2}_-{#1} &}}}%
          {\ifdimgreater{\ulen}{\xlen}%
             {\mathrel{\xymatrix@M=.12ex@C=3.2ex{\ar[r]^-{#2}_-{#1} &}}}
             {\mathrel{\xymatrix@M=.12ex@C=\the\xlen{\ar[r]^-{#2}_-{#1} &}}}}}%
      {\mathrel{\xymatrix@M=.12ex@C=\the\xlen{\ar[r]^-{#2}_-{#1} &}}}%
   }
\newcommand{\xla}[2][]{%
   \ifbool{@display}%
      {\settowidth{\olen}{$\overset{#2}{\longleftarrow}$}%
       \settowidth{\ulen}{$\underset{#1}{\longleftarrow}$}%
       \settowidth{\xlen}{$\xleftarrow[#1]{#2}$}%
       \ifdimgreater{\olen}{\xlen}%
          {\underset{#1}{\overset{#2}{\longleftarrow}}}%
          {\ifdimgreater{\ulen}{\xlen}%
             {\underset{#1}{\overset{#2}{\longleftarrow}}}
             {\xleftarrow[#1]{#2}}}}%
      {\xleftarrow[#1]{#2}}
   }
\newcommand{\isoarrow}{%
   \ifbool{@display}{\overset{\sim}{\longrightarrow}}{\xrightarrow\sim}%
   }
\begin{document}

\title[A dimension formula of closed ADLVs of parahoric level]{A dimension formula of closed affine Deligne-Lusztig varieties of parahoric level}

\author{Arghya Sadhukhan}
\address[A. S.]{Department of Mathematics, National University of Singapore, 10 Lower Kent Ridge Drive}

\keywords{Affine Deligne-Lusztig variety, Newton stratification, dimension formula, Kottwitz set, affine Weyl group}
\email{arghyas0@nus.edu.sg}
\subjclass[2010]{20G25,11G25,20F55}

\begin{abstract}
We establish a dimension formula for certain union $X^G(\mu,b)_J$ of affine Deligne-Lusztig varieties associated to arbitrary parahoric level structures of split reductive groups, under certain genericity hypotheses.
\end{abstract}

\maketitle

\vspace{-0.8cm}
\section{Introduction}

Let $F$ be a non-archimedean local field with completed maximal unramified extension of $\breve{F}$ and residue field $\kappa$. Let $(\bG,\{\mu\},b)$ be a triple where $\bG$ is a connected reductive group over $F$, $\{\mu\}$ is a conjugacy class of cocharacters $\mu: \mathbb{G}_{m,\bar{F}}\rightarrow \bG_{\bar{F}}$ and $[b] \in \bG(\breve F)$. Fix a $F$-rational parahoric level structure $J$ of $\bG$, with corresponding standard parahoric subgroup $\breve{\CJ} \subset \bG(\breve{F})$. Associated to this data, we have the following closed affine Deligne-Lusztig variety, cf. \cite{Rap05}
\begin{equation*}
    X^\bG(\mu,b)_J(\kappa):=\{g \breve \CJ \in \bG(\breve F)/\breve \CJ; g \i b \s(g) \in \breve \CJ \Adm(\mu) \breve \CJ\} \subset \CF l^\bG_J(\k)=\bG(\breve F)/\breve \CJ.
\end{equation*}

Here $X^\bG(\mu,b)_J$ is a subscheme, locally of finite type, of the partial affine flag variety attached to level $J$ in the usual
sense in equal characteristic; in the sense of Zhu \cite{Zhu17} and Bhatt-Scholze \cite{BS17} in mixed characteristics. Since the underlying topological space of $\CF l^\bG_J$ is Jacobson, any locally closed subscheme is uniquely determined by its $\kappa$-valued points, hence we freely identify $X^\bG(\mu,b)_J$ with the above set of its geometric points. These varieties play an important role in the study of the special fiber of both Shimura varieties and moduli spaces of shtukas, see \cite{RV14}, \cite{Vi18}.

By He's work on the Kottwitz-Rapoport conjecture in \cite{He16a}, $X^\bG(\mu,b)_J\neq \emptyset$ if and only if $[b]$ is neutrally acceptable, i.e. $[b] \in B(\bG,\mu)$, which we assume from now on. The next fundamental question concerns the dimension of these geometric objects. The first results in this direction was obtained when $\bG$ is an unramified group and $\breve{\CJ}$ is a hyperspecial subgroup. In this case, the conjectural dimension formula put forth by Rapoport in \cite{Rap05} was established in \cite{GHKR06} and \cite{Vi06} for split groups, and in \cite{Ham15} and \cite{Zhu17} for general quasi-split unramified groups. On the other extreme lies the case of Iwahori level structure. Here we only have a partial understanding. For a quasi-split group $\bG$ and its Iwahori subgroup $\breve{\CI}$, a dimension formula was first obtained in \cite{HY21} under certain genericity hypotheses, namely that $\mu$ is \textit{superregular} and $b$ is \textit{sufficiently small} compared to $\mu$; the first of these conditions was subsequently weakened to $\mu$ being only regular in \cite{Sad23}. Note that by varying the level structure, we obtain transition maps $\pi_{J,J'}: \CF l_{J} \ra \CF l_{J'}$ whenever $J \subset J'$, thus inducing $\pi_{J,J'}: X^\bG(\mu,b)_J \ra X^\bG(\mu,b)_{J'}$; by \cite{He16a}, the latter map is surjective, hence $\dim X^\bG(\mu,b)_J \geq \dim X^\bG(\mu,b)_{J'}$. 

The purpose of this note is to prove a dimension formula in the case of an arbitrary parahoric level for split groups that interpolates these two known formulas.
\begin{theorem}\label{main-thm}
Suppose that $\bG$ is a split group and let $[b] \in B(\bG,\mu)$. Then
    \begin{equation}\label{dim-formula}
        \dim X^\bG(\mu,b)_J 
     =\<\rho, \underline \mu-\nu(b)\>-\frac{1}{2}\text{def}(b)+\frac{1}{2}[\{\ell(w_0)-\ell_R(w_0)\}-\{\ell(w_{J})-\ell_R(w_J)\}],
    \end{equation}

    in the following cases:
    \begin{enumerate}[(i)]
        \item $\mu$ is $2$-regular, $\mu \geq \nu(b)+\text{wt}(w_0,1)$ and $J=\emptyset$ is the Iwahori level structure.
        \item  $\mu$ is $4$-regular, $\mu \geq \nu(b)+2\rho^\vee+\text{wt}(w_0,1)$ and $J$ avoids at least one special vertex.
        \item $\mu$ is $2\ell(w_0)+2$-regular, $\mu \geq \nu(b)+2\rho^\vee+\text{wt}(w_0,1)$ and $J$ is arbitrary.
    \end{enumerate}
    
\end{theorem}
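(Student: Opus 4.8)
The strategy is to bootstrap from the Iwahori case through the projection $\pi=\pi_{\emptyset,J}\colon\CF l_\emptyset\to\CF l_J$, which is a Zariski‑locally trivial bundle whose fibre is the flag variety $\breve\CJ/\breve\CI$ of the reductive quotient $L_J$ of $\breve\CJ$, of dimension $\ell(w_J)$. Combining the parabolic cell decomposition $\breve\CJ w\breve\CJ=\bigsqcup_{w'\in W_JwW_J}\breve\CI w'\breve\CI$ with $\breve\CJ\Adm(\mu)\breve\CJ=\bigcup_{w\in\Adm(\mu)}\breve\CJ w\breve\CJ$, the full preimage $\pi^{-1}\bigl(X^\bG(\mu,b)_J\bigr)$ is the disjoint union of fine Iwahori‑level affine Deligne--Lusztig varieties $\bigsqcup_{w'}X^\bG(w',b)_\emptyset$ over the $W_J$‑saturation $\mathrm{Sat}_J\Adm(\mu):=\bigcup_{w\in\Adm(\mu)}W_JwW_J$ of the admissible set; since this preimage is a $\breve\CJ/\breve\CI$‑bundle over $X^\bG(\mu,b)_J$, one obtains the exact identity
\[
\dim X^\bG(\mu,b)_J=\max_{w'\in\mathrm{Sat}_J\Adm(\mu)}\dim X^\bG(w',b)_\emptyset-\ell(w_J).
\]
Case (i) is the known Iwahori formula of \cite{HY21}, \cite{Sad23} (there $w_\emptyset=e$ and the bracket in \eqref{dim-formula} collapses to $\tfrac12(\ell(w_0)-\ell_R(w_0))$), so in view of the displayed identity the whole theorem is equivalent to the statement that, under the genericity hypotheses of (ii)--(iii),
\[
\max_{w'\in\mathrm{Sat}_J\Adm(\mu)}\dim X^\bG(w',b)_\emptyset=\dim X^\bG(\mu,b)_\emptyset+\tfrac12\bigl(\ell(w_J)+\ell_R(w_J)\bigr).
\]

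To evaluate that maximum I would extend the Iwahori‑level analysis of \cite{HY21}, \cite{Sad23} from $\Adm(\mu)$ to the larger set $\mathrm{Sat}_J\Adm(\mu)$. The engine is He's theory of class polynomials and Deligne--Lusztig reduction, which for each $w'$ bounds $\dim X^\bG(w',b)_\emptyset$ by the virtual dimension $d_{w'}(b)$ and, under suitable genericity, makes this bound sharp for a controlled family of $w'$, the exact value being encoded by shortest‑path weights in the quantum Bruhat graph. Elements of $\mathrm{Sat}_J\Adm(\mu)$ can be longer than those of $\Adm(\mu)$ by as much as $2\ell(w_J)$, which is why more regularity is demanded: the extra $2\rho^\vee$ in (ii)--(iii) keeps the reduction clean for the moderately longer $J$‑saturated elements, and the much stronger $2\ell(w_0)+2$‑regularity in (iii) is forced by the fact that once $J$ contains every special vertex there is no special maximal parahoric between $\breve\CJ$ and $\bG(\breve F)$ to anchor the estimate, so $\ell(w_J)$ may be as large as $\ell(w_0)$. (In case (ii) one can instead exploit such a special maximal parahoric $\breve\CJ'\supseteq\breve\CJ$: factoring $\pi_{\emptyset,J'}$ through $\pi_{J,J'}$ reduces the remaining fibre count to a finite reductive group over $L_{J'}$, which explains the milder $4$‑regularity there.) Granting sharpness on $\mathrm{Sat}_J\Adm(\mu)$, what remains is a quantum‑Bruhat‑graph optimization: one locates the maximiser $w^\ast$ — morally the longest element $w_J\,\tw\,w_J$ of the $W_J$‑double coset of a suitably chosen Iwahori‑optimal element $\tw$ — and verifies that completing $\tw$ within its double coset raises the length by $2\ell(w_J)$ while adjusting the finite part of the virtual dimension so that the dimension increases by precisely $\tfrac12(\ell(w_J)+\ell_R(w_J))$; subtracting $\ell(w_J)$ then yields \eqref{dim-formula}, and since the identity above is an equality both the upper and the lower bound come out simultaneously.

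The hard part will be these last two steps carried out uniformly in $J$: showing that the virtual‑dimension bound stays sharp on all of $\mathrm{Sat}_J\Adm(\mu)$ — i.e.\ that the reduction never lands on a ``short'' element whose class polynomial fails to be a single monomial, which is where the regularity thresholds and the $2\rho^\vee$ of slack are spent — and then running the quantum‑Bruhat‑graph optimization over $\mathrm{Sat}_J\Adm(\mu)$ so that the correction terms assemble into $\tfrac12(\ell(w_J)-\ell_R(w_J))$ for every $J$, including the choice of which Iwahori‑optimal element to complete. Case (iii) is the genuinely delicate one: there the saturated set reaches elements of length comparable to $\ell(t^\mu)+\ell(w_0)$, the optimization loses the crutch of a bounding special parahoric, and with regularity weaker than $2\ell(w_0)+2$ the maximum is no longer attained and the formula degrades. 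Everything else — the bundle structure of $\pi$, the reduction to the Iwahori statement, and the lower bound (automatic from $w^\ast$ once the identity is known) — is comparatively formal.
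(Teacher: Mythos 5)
Your starting identity is correct: $\breve\CJ\Adm(\mu)\breve\CJ$ is the union of Iwahori double cosets indexed by the saturation $\bigcup_{w\in\Adm(\mu)}\wtd W_J w\wtd W_J$, and since $\pi_{\emptyset,J}$ is a locally trivial fibration with fibre $\breve\CJ/\breve\CI$ of dimension $\ell(w_J)$, one does get $\dim X(\mu,b)_J=\max_{w'}\dim X_{w'}(b)-\ell(w_J)$, the maximum over the saturated set. This is a genuinely different reduction from the paper's, which uses instead the equality $\dim X(\mu,b)_J=\max_{w\in{}^J\Adm(\mu)}\dim X_w(b)$ of \cite{GH15}, \cite{HR17}, i.e. a maximum over \emph{minimal} double-coset representatives with no $\ell(w_J)$ correction. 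The problem is that everything after your identity is asserted rather than proved, and that is exactly where the content of the theorem lies. For the upper bound you must control $\dim X_{w'}(b)$ for saturated $w'$ that are not minimal in their $\wtd W_J$-cosets; the only general tool is the virtual dimension $d_{w'}(b)$, whose maximum over the saturation is a new optimization (both $\ell(w')$ and the finite part $\eta(w')$ move when you multiply by elements of $\wtd W_J$), and you give no argument that it equals $\dim X(\mu,b)_\emptyset+\tfrac12(\ell(w_J)+\ell_R(w_J))$ rather than overshooting. For the lower bound you must exhibit a saturated maximizer with $\dim X_{w^\ast}(b)=d_{w^\ast}(b)$; your candidate ``$w_J\tw w_J$ for an Iwahori-optimal $\tw$'' is only ``moral'', and the sharpness theorems you invoke are proved for elements of a specific shape, not for arbitrary long representatives of a double coset. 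In the paper these two steps are precisely \cref{d-adm}, \cref{min} and \cref{exceptional}: the problem is converted into computing $\min_{x\in{}^JW}d(x,xw_0)$ over Schremmer's semi-affine quotient ${}^JW$, and solving it requires the $\ell_R$-additive decomposition $w_0=({}^{x^{-1}}\overline{w}_J)w_I$, established case by case; note also the paper's remark that when $s_0\in J$ the ``complete within the coset''/``$x\le xw_0$'' heuristic is simply false (type $B_4$, $J=\{s_0,s_1,s_3,s_4\}$). Your explanations of the regularity thresholds are likewise off: the $2\ell(w_0)+2$-regularity in (iii) is not about the absence of a special maximal parahoric above $\breve\CJ$, but is used to show that depth-$\le 1$ elements of ${}^J\Adm(\mu)$ have strictly smaller virtual dimension so that \cref{left-part} applies, while the $2\rho^\vee$ of slack is spent on making He's dimension$=$virtual-dimension theorem applicable to the explicit element $xt^{\mu-\wt(x,xw_0)}w_0x^{-1}$.

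There is also a concrete error at the outset: you dispose of case (i) as ``the known Iwahori formula of \cite{HY21}, \cite{Sad23}'', but under the hypotheses of (i) ($\mu$ only $2$-regular, gap only $\mu\ge\nu(b)+\wt(w_0,1)$) it is not known from those references: \cite{Sad23} requires $\mu-\nu(b)\ge 2\rho^\vee$, and $\wt(w_0,1)$ is in general strictly smaller than $2\rho^\vee$ (in type $A_{2n}$ it equals $\varpi_n^\vee+\varpi_{n+1}^\vee$). The paper proves (i) by a new argument: the antidominant element $w'=w_0t^{\mu-\wt(w_0,1)}$ lies in $\Adm(\mu)$, satisfies $d_{w'}(b)=d_{\Adm(\mu)}(b)$, and is cordial by \cite{MV21}, so $\dim X_{w'}(b)=d_{w'}(b)$ whenever nonempty, the gap hypothesis guaranteeing nonemptiness via $\nu([b_{w'}])=\mu-\wt(w_0,1)$. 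Your proposal contains no substitute for this, so case (i) is unproved; for (ii)--(iii) the stronger hypotheses do place you back in the range of the known Iwahori formula, but the combinatorial core for general $J$ described above is still missing.
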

Here $\ell_R(\cdot)$ is the reflection length of elements in finite Weyl group, and $\text{wt}(\cdot,\cdot)$ is the weight function defined in terms of its associated quantum Bruhat graph, see \cref{sec:pre} for further details. Note that in part(i) we only improve \cite{Sad23} by weakening the ``gap" condition $\mu-\nu(b)$ - which is assumed to be at least $2\rho^\vee$ in loc. sit., albeit at the cost of introducing some regularity to $\mu$; note that $\wt(w_0,1)$ is substantially smaller than $2\rho^\vee$, cf. \cite[section 5]{Sad23}, e.g. in type $A_{2n}$, we have $\wt(w_0,1)=\varpi_n^\vee+\varpi_{n+1}^\vee$.
\subsection{Strategy} 
We first discuss the proof of \cref{main-thm}\textit{[(i)]}. The assumption $\nu-\nu(b)\geq 2\rho^\vee$ essentially enters the proof of \cite{HY21}[Theorem 6.1] in asserting that the KR stratum associated to certain element in $\wtd{W}$ is top-dimensional in $X^\bG(\mu,b)_{\emptyset}$, and the dimension of this KR stratum is known only under such gap hypothesis from \cite{He21pi}. To circumvent this condition, we simply construct a different top dimensional element (one in the antidominant chamber) even under the weaker gap hypothesis, by appealing to the theory of cordial elements in affine Weyl group \`a la \cite{MV21}.

Our basic strategy in proving \cref{main-thm}\textit{[(ii), (iii)]} is to adapt the methods of \cite{HY21} to arbitrary parahoric level. To that end, we note that arguments involving the EKOR strata in $X^\bG(\mu,b)_J$ - similar to those considered in loc. sit. for the KR strata in $X^\bG(\mu,b)_{\emptyset}$ - reduces our problem in \cref{sec:recollect} to one of minimizing distances in the quantum Bruhat graph; more specifically, under the conditions in the statement of the theorem, determining the dimension is equivalent to finding $$\min\{d(x,xw_0): x\in ~^JW\},$$ where $^JW$ is the semi-affine quotient of the finite Weyl group $W$ by $J$ as introduced in \cite{Sch24}; in case $\breve\CJ$ is contained in a fixed hyperspecial subgroup, this is the set of minimal length elements in $W_J\backslash W$.

However, explicit construction of an minimizing element is more complicated when $J\neq \emptyset$. We recall that in the case of Iwahori level structure, it was established in \cite{HY21} that this minimum is realized in the smaller subset of elements satisfying $x\leq xw_0$; since distance in the quantum Bruhat graph between two such elements is simply the difference of length, the previous optimization problem simplifies to finding $\max\{\ell(x): x\leq xw_0\}$ - which is more amenable to an inductive argument. We carry out a similar argument in \cref{sec-abc} Cartan types $A_n, B_n, C_n$ when $J\subset \BS$; however this approach does not seem to work nicely in type $D_n$, and in fact the previous observation about the minimum occuring at an element $x$ with $x<xw_0$ is false in general, when $J$ contains an affine simple reflection.

Hence, we resort to a different method in \cref{sec:alter} to handle the optimization problem in an uniform manner: by relating the long elements of the finite Weyl groups $W_J$ and $W$. We show that for any subset $J$ of affine simple roots, one can find a suitable subset $I$ of finite simple roots such that $w_0$ decomposes into product of certain $(~^JW)^{-1}$-conjugate of (linearization of) the long element of $w_J$ and the long element $w_I$ in a $\ell_R$-additive way. This is a proposition of independent interest, and we deduce it by combining explicit decompositions of $w_0$ listed in \cite{Sad23}, with classification of certain involutions in terms of long elements of parabolic subgroups carried out in \cite{zib23}.

\begin{acknowledgement}
The author thanks Xuhua He for his encouragement to work on this project and Mita Banik for her help with SageMath computations.
\end{acknowledgement}

\section{Preliminaries}\label{sec:pre}
\subsection{Group-theoretic notations}\label{sec:notation} 

Recall that $\bG$ is a connected reductive quasi-split group over a non-archimedean local field $F$. Let $\breve F$ be the completion of the maximal unramified extension of $F$ and $\s$ be the Frobenius morphism of $\breve F/F$. The residue field of $F$ is a finite field $\mathbb F_q$ and the residue field of $\breve F$ is the algebraically closed field $\bar{\mathbb F}_q$. We use the same symbol $\s$ for the induced Frobenius morphism on $\breve G:=\bG(\breve F)$. Let $S$ be a maximal $\breve F$-split torus of $\bG$ defined over $F$, which contains a maximal $F$-split torus. Let $\mathcal A$ be the apartment of $\bG_{\breve F}$ corresponding to $S_{\breve F}$. We fix a $\s$-stable alcove $\mathfrak a$ in $\mathcal A$, and let $\breve \CI \subset \breve G$ be the Iwahori subgroup corresponding to $\mathfrak a$. Then $\breve \CI$ is $\s$-stable.

Let $T$ be the centralizer of $S$ in $\bG$. Then $T$ is a maximal torus. We denote by $N$ the normalizer of $T$ in $\bG$. The \emph{Iwahori--Weyl group} (associated to $S$) is defined as $$\wtd{W}= N(\breve F)/T(\breve F) \cap \breve \CI.$$ 

We denote by $\ell$ the length function on $\tW$ determined by the base alcove $\mathfrak a$ and denote by $\tilde \BS$ the set of simple reflections in $\tW$. Let $W_{\text{aff}}$ be the subgroup of $\wtd{W}$ generated by $\tilde \BS$. Then $W_{\text{aff}}$ is an affine Weyl group. Let $\Omega \subset \wtd{W}$ be the subgroup of length-zero elements in $\wtd{W}$. Then $\wtd{W}=W_{\text{aff}} \rtimes \Omega.$ Note that this action of $\Omega$ preserves $\wtd{\BS}$. For any $J \subset \wtd{\BS}$, let $\wtd{W_J} \subset \wtd{W}$ be the subgroup generated by the simple reflections in $J$ and by $^J\wtd{W}$ the set of minimal length elements for the cosets $\wtd{W}_J\backslash \wtd{W}$.

Let $W=N(\breve F)/T(\breve F)$ be the relative Weyl group. We denote by $\BS$ the subset of $\wtd{\BS}$ consisting of simple reflections generating $W$. We let $\Phi$ (resp. $\D$, $\D_{\text{aff}}$) denote the set of roots (resp. simple roots, simple affine roots). We write $\Gamma$ for $\Gal(\bar F/F)$, and write $\Gamma_0$ for the inertia subgroup of $\Gamma$. Then fixing a special vertex of the base alcove $\mathfrak a$, we have the splitting $$\wtd{W}=X_*(T) \rtimes W=\{t^{ \l} w;  \l \in X_*(T), w \in W\}.$$ 

For an element $x\in \wtd{W}$ we denote by $\overline{x}$ its projection to $W$; similarly, for a root $\a\in \D_{\text{aff}}$ we denote its spherical part by $\overline{\alpha}$. For an irreducible Weyl group $W$ of rank $n$, we follow the labeling of roots as in \cite{Bou} and we usually write $s_i$ instead of $s_{\a_i}$, where $\D=\{\a_i: 1 \leq i \leq n\}$. The additional element in $\D_{\text{aff}}$ is then the affine simple root $-\theta+1$, where $\theta$ is the longest root in $\Phi$, and we denote the corresponding reflection in $\wtd{\BS}$ by $s_0$. We will also sometimes denote by $\a_s$ the affine simple root corresponding to $s\in \wtd{\BS}$. For $J\subset \wtd{\BS}$ we let $w_J$ be the longest element of the corresponding parabolic subgroup and write $w_0=w_{\BS}$. Let $\rho$ be the dominant weight with $\<\a^\vee, \rho\>=1$ for any $\a \in \D$. Let $\{\varpi_i^\vee: 1\leq i\leq n\}$ be the set of fundamental coweights. If $\varpi^\vee_i$ is minuscule,
we denote the image of $t^{ \varpi_i^\vee}$ under the projection $\wtd{W} \ra \Omega$ by $\t_i$; then conjugation by $\t_i$ is a length preserving automorphism of $\wtd{W}$ which we denote by $\text{Ad}(\t_i)$.

For any dominant coweight $\lambda$ we define $\text{depth}(\lambda)=\min\{\<\a,\lambda\>: \a\in \D\}$. Given $X\subset \wtd{W}, k\in \BZ_{\geq 0}$, we denote by $X_{>k}$ (resp. $X_{\leq k}$) the subset of $X$ whose elements have associated dominant translation of depth at least $k+1$ (resp. at most $k$); we call such elements to be $k$-regular. 

\subsection{The $\s$-conjugacy classes of $\breve G$} We say that two elements $b, b' \in \Breve{G}$ are $\s$-conjugate if there is some $g\in \Breve{G}$ such that $b'=g b \s(g) \i$. Let $B(\bG)$ be the set of $\s$-conjugacy classes on $\breve G$. By the work of Kottwitz in \cite{Ko85} and \cite{Ko97}, any $\s$-conjugacy class $[b]$ is determined by two invariants: 
\begin{itemize}
	\item The Kottwitz point $\k([b]) \in \pi_1(\bG)_{\G}$, the set of $\G$-coinvariants of the Borovoi fundamental group $\pi_1(\bG)$; 
	
	\item The Newton point $\nu([b]) \in ((X_*(T)_{\Gamma_0, \BQ})^+)^{\langle\sigma\rangle}$, the set of $\<\s\>$-invariants of the intersection of $X_*(T)_{\Gamma_0}\otimes \BQ=X_*(T)^{\Gamma_0}\otimes \BQ$ with the set $X_*(T)_\BQ^+$ of dominant elements in $X_*(T)_\BQ$.
\end{itemize}

We denote by $\le$ the dominance order on $X_*(T)_\BQ^+$, i.e., for $\nu, \nu' \in X_*(T)_\BQ^+$, we have $\nu \le \nu'$ if and only if $\nu'-\nu$ is a non-negative (rational) linear combination of positive coroots over $\breve F$. The dominance order on $X_*(T)_\BQ^+$ extends to a partial order on $B(\bG)$. Namely, for $[b], [b'] \in B(\bG)$, we say that $[b] \le [b']$ if and only if $\k([b])=\k([b'])$ and $\nu([b]) \le \nu([b'])$.

\subsubsection{Neutrally acceptable elements}\label{sec:bgmu}
Let $\{\mu\}$ be a conjugacy class of cocharacters over $\bar{F}$. Choose $\mu$ be a dominant representative of $\{\mu\}$ and denote by $\underline{\mu}$ its image in $X_*(T)_{\G_0}$. We define the \textit{$\mu$-admissible set}
\begin{equation*}
\Adm(\mu)=\{w \in \wtd{W};\ w \le t^{x(\underline \mu)} \text{ for some }x \in W\}.
\end{equation*}
Let $\breve \CJ$ be a standard $F$-rational parahoric subgroup of $\bG(\breve F)$, i.e., a $\s$-invariant parahoric subgroup that contains $\Breve{I}$. This corresponds to a spherical type $J$, i.e. a subset $J\subset \wtd{\mathbb{S}}$ such that $\wtd{W}_J$ is finite. Set $^J\text{Adm}(\mu)=\Adm(\mu)\cap ~^J\wtd{W}$. 
We also have the set of \textit{neutrally acceptable elements}
\begin{equation*}
B(\bG, \mu)=\{ [b]\in B(\bG)\mid \kappa([b])=\mu^\natural, \nu([b])\leq \mu \} .
 \end{equation*}
Here $\mu^\natural$ denotes the common image of $\mu\in\{\mu\}$ in $\pi_1(\bG)_{\s}$.

The set $B(\bG, \mu)$ naturally inherits the partial order from $B(\bG)$.

\subsection{Affine Deligne-Lusztig varieties}
The \emph{affine Deligne-Lusztig variety} associated to  $w \in \wtd{W}$ and $b \in \breve G$ is a locally closed subscheme of the partial affine flag variety of $\bf G$, locally of finite type over $\overline{\mathbb F}_p$ and of finite dimension, with the set of geometric points given by 
\begin{equation*}
    X_{w}(b)=\{g\in \breve G/\breve \CI: g^{-1}b\s(g) \in \breve\CI w \breve\CI\}.
\end{equation*}

If $F$ is of equal characteristic, then by affine flag variety we mean the ``usual'' affine flag variety; in the case of mixed characteristic, this notion should be understood in the sense of perfect schemes, as developed by Zhu~\cite{Zhu17} and by Bhatt and Scholze~\cite{BS17}.

Let $J$ be a spherical type. We set
\begin{equation*}
X^\bG(\mu,b)_J=\{g \breve \CJ \in \breve G/\breve \CJ; g \i b \s(g) \in \breve \CJ \Adm(\mu) \breve \CJ\}.
\end{equation*}
We usually omit the adoration $\bG$ from the notation and also write $X(\mu,b)$ for $X(\mu,b)_\emptyset$, which is then an union of affine Deligne-Lusztig varieties. Settling the Kottwitz-Rapoport conjecture made in \cite{KR03} and \cite{Rap05} about the non-emptiness pattern for $X(\mu,b)_J$, He proves the following result in \cite{He16}.
\begin{theorem}\label{KR-conjecture}\cite[theorem A]{He16a}
$X(\mu,b)_J \neq \emptyset$ if and only if $[b] \in B(\bG,\mu)$.
\end{theorem}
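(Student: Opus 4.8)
I would prove the two implications separately, disposing of necessity first since it is the softer half.

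\textbf{Necessity.} Suppose $X(\mu,b)_J\ne\emptyset$, say $g\i b\s(g)\in\breve\CJ\,w\,\breve\CJ$ with $w\in\Adm(\mu)$. Decomposing $\breve\CJ$ into Iwahori double cosets, $g\i b\s(g)\in\breve\CI\,v\,\breve\CI$ for some $v\in\wtd W$ that is Bruhat-dominated by an element of $\wtd W_J\{t^{x(\underline{\mu})}:x\in W\}\wtd W_J$; in particular the dominant coweight associated to $v$ is $\le\mu$. Since the Kottwitz homomorphism kills parahoric subgroups and is constant along Bruhat intervals (it factors through $\wtd W\to\Omega$), $\k([b])=\k(v)=\mu^\natural$. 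For the Newton invariant I would invoke the converse to Mazur's inequality in the Iwahori--Weyl group (Kottwitz--Rapoport, Rapoport--Richartz, He): $X_v(b)\ne\emptyset$ forces $\nu([b])\le\nu_v$, and $\nu_v\le\mu$ by the previous sentence. Hence $[b]\in B(\bG,\mu)$.

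\textbf{Sufficiency.} Here the plan has three moves. First, reduce to Iwahori level: since $\emptyset\subset J$ the transition morphism $\pi_{\emptyset,J}\colon X(\mu,b)_\emptyset\to X(\mu,b)_J$ is surjective — a reduction that does not itself use the theorem being proved, amounting to a Lang-type statement inside the finite-dimensional parahoric $\breve\CJ$ together with closure properties of $\Adm(\mu)$ under $\wtd W_J$ — so it suffices to prove $X(\mu,b)_\emptyset\ne\emptyset$. Second, since $\breve\CI\Adm(\mu)\breve\CI=\bigsqcup_{w\in\Adm(\mu)}\breve\CI w\breve\CI$ one has $X(\mu,b)_\emptyset=\bigcup_{w\in\Adm(\mu)}X_w(b)$, so it is enough to exhibit one $w\in\Adm(\mu)$ with $X_w(b)\ne\emptyset$. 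Third — the heart of the matter — I would use the non-emptiness criterion for a single affine Deligne--Lusztig variety in the affine flag variety, namely that $X_w(b)\ne\emptyset$ iff $[b]$ lies in a subset $B(\bG)_w\subset B(\bG)$ extractable from the class polynomials of $w$ (equivalently: $[b]=[\dot w']$ for a minimal-length $w'$ in a $\s$-conjugacy class of $\wtd W$ reachable from $w$ by length-nonincreasing elementary operations; this is established by the Deligne--Lusztig-type reduction method inside $\breve G$), and then show that every $[b]\in B(\bG,\mu)$ satisfies $[b]\in B(\bG)_w$ for some admissible $w$.

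For this last step I would induct on the semisimple rank using the Hodge--Newton decomposition. The base case is $[b]=[\tau_\mu]$, the basic class with Kottwitz point $\mu^\natural$: then $\tau_\mu\in\Omega\cap\Adm(\mu)$, and the base alcove already lies in $X_{\tau_\mu}(\dot\tau_\mu)$, so this variety is nonempty. For the inductive step, if $(\bG,\mu,[b])$ is Hodge--Newton decomposable along a proper Levi $\bM$, descend to $\bM$ (where $\mu$ decomposes accordingly and $[b]$ stays neutrally acceptable), apply the inductive hypothesis, and transport the resulting $\bM$-admissible element back to $\Adm(\mu)$ via the compatibility of $P$-alcove elements with the decomposition; if $(\bG,\mu,[b])$ is Hodge--Newton indecomposable, then $\mu-\nu([b])$ is forced to be sufficiently positive in every non-central direction, and I would build the required $w\in\Adm(\mu)$ explicitly — e.g. from a $\s$-straight, or cordial (in the sense of \cite{MV21}), representative of $[b]$ — and verify that its class polynomial does not vanish at $[b]$.

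The main obstacle is exactly this combinatorial core: fitting $\Adm(\mu)$ against the Newton strata $B(\bG)_w$. Both pieces are delicate — the Hodge--Newton reduction demands careful tracking of how $\Adm(\mu)$, $B(\bG,\mu)$ and $P$-alcove elements behave under passage to Levi subgroups, while the Hodge--Newton--indecomposable case demands an explicit admissible element with non-vanishing class polynomial at $[b]$, which is precisely where quantum-Bruhat-graph and straight-element combinatorics genuinely enter. By comparison, necessity and the reduction from arbitrary $J$ to Iwahori level are comparatively formal, once the requisite stability properties of $\Adm(\mu)$ and the generic Newton points of Iwahori double cosets are available.
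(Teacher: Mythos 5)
A point of context first: the paper does not prove this statement at all --- it is Theorem A of He \cite{He16a}, imported with a citation, and the rest of the paper uses it as an input. So there is no internal proof to compare you with; what can be judged is whether your sketch reproduces He's published argument and whether it is complete. In outline it does track He's strategy (pass to Iwahori level, write $X(\mu,b)_\emptyset$ as the union of the $X_w(b)$ over $w\in\Adm(\mu)$, use the reduction-method/class-polynomial criterion for nonemptiness of a single $X_w(b)$, and reduce along Levi subgroups in a Hodge--Newton/$P$-alcove induction), but as written it has genuine gaps.

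Concretely: (1) in the necessity half, the step ``the dominant coweight associated to $v$ is $\le\mu$'' for $v\in W_J\,w\,W_J$ with $w\in\Adm(\mu)$ is exactly the compatibility of the admissible set with the parahoric (essentially $W_J\Adm(\mu)W_J=\Adm(\mu)$), a nontrivial theorem of Haines and He, not a bookkeeping fact; also what you actually invoke there is Mazur's inequality (Rapoport--Richartz: nonemptiness forces $\nu([b])\le\mu$), not its \emph{converse} --- the converse is the sufficiency direction you are trying to prove. (2) In the sufficiency half you call on surjectivity of $\pi_{\emptyset,J}$ and describe it as a formal Lang-type statement; in fact that surjectivity is itself part of \cite{He16a}, resting on the same compatibility result and the partial-conjugation method --- and it is also unnecessary, since the trivial inclusion $\breve\CI\,w\,\breve\CI\subset\breve\CJ\,w\,\breve\CJ$ already gives $X(\mu,b)_\emptyset\neq\emptyset\Rightarrow X(\mu,b)_J\neq\emptyset$. (3) The core --- producing, for \emph{every} $[b]\in B(\bG,\mu)$, some $w\in\Adm(\mu)$ with $X_w(b)\neq\emptyset$ --- is only gestured at. He's argument produces a $\sigma$-straight element of $\Adm(\mu)$ whose associated $\sigma$-conjugacy class is $[b]$ (via reduction to the superbasic case), and nonemptiness for straight elements is then automatic; your ``verify that its class polynomial does not vanish at $[b]$'' in the Hodge--Newton-indecomposable case is precisely the unproved combinatorial heart, and cordial elements in the sense of \cite{MV21} (used in this paper for dimension statements, not nonemptiness) do not supply it. In short, the proposal is a plausible road map of He's proof rather than a proof, and it understates where the real difficulties lie.
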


\subsection{Quantum Bruhat graphs}
We recall the quantum Bruhat graph introduced by Brenti, Fomin, and Postnikov in the context of quantum cohomology ring of complex flag variety, see \cite{BFP99}, also \cite{FGP97}. By definition, a \emph{quantum Bruhat graph} $\G_{\Phi}$ is a directed graph with 
\begin{itemize}
\item vertices given by the elements of $W$; 

\item upward edges $w\rightharpoonup w s_\a$ for some $\a \in \Phi^+$ with $\ell(w s_\a)=\ell(w)+1$;

\item downward edges $w \rightharpoondown w s_\a$ for some $\a \in \Phi^+$ with $\ell(w s_\a)=\ell(w)-\< 2 \rho, \a^\vee\>+1$. 
\end{itemize}

\smallskip

The \emph{weight of a path} in $\G_{\Phi}$ is defined to be the sum of weights of the edges in the path. For any $x,y\in W$, we denote by $d(x, y)$ the minimal length among all paths in $\G_{\Phi}$ from $x$ to $y$. Any path between $x$ and $y$ affording $d(x,y)$ as its length is called a \textit{shortest path} between them. Then the following lemma defines a function $\text{wt}: W \times W \ra \BZ_{\geq 0} \Phi^\vee$.

\begin{lemma}\label{wt-x-y} \cite[Lemma 1]{Pos05}, \cite[Lemma 6.7]{BFP99}
Let $x,y\in W$. 
\begin{enumerate}
\item There exists a directed path (consisting of possibly both upward and downward edges) in $\G_{\Phi}$ from $x$ to $y$.

\item Any two shortest paths in $\G_{\Phi}$ from $x$ to $y$ have the same weight, which we denote by $\text{wt}(x, y)$. 
\item Any path in $\G_{\Phi}$ from $x$ to $y$ has weight $\ge \wt(x,y)$.
\end{enumerate}
\end{lemma}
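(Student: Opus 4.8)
\textbf{Proof plan for Lemma~\ref{wt-x-y}.}
The plan is to prove the three statements in the order stated, since each relies on the previous one, and to exploit the "tilted Bruhat order" structure underlying the quantum Bruhat graph. For part (1), I would first record the two basic facts that make the graph well-behaved: (a) for any $w\in W$ and any simple reflection $s_i$, exactly one of $w\rightharpoonup ws_i$ (if $\ell(ws_i)=\ell(w)+1$) or $ws_i\rightharpoonup w$ (if $\ell(ws_i)=\ell(w)-1$, using that $\langle 2\rho,\alpha_i^\vee\rangle = 2$ so a simple downward edge decreases length by $1$) is an edge; and (b) the graph is "strongly connected" — there is a path from any $x$ to any $y$. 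For (b) it suffices to show every vertex $w$ can reach the identity $e$ and can be reached from $e$. Reaching $e$: induct on $\ell(w)$, picking a simple $s_i$ with $\ell(ws_i)<\ell(w)$, giving an upward edge $ws_i\rightharpoonup w$ reversed — wait, that goes the wrong way; instead pick $s_i$ with $\ell(s_iw)<\ell(w)$ and use a left-multiplication argument, or simply note that by (a) applied to $w$ and a descent $s_i$ one gets the downward edge $w\rightharpoondown ws_i$ only when the length drops by $\langle 2\rho,\alpha_i^\vee\rangle-1=1$, which is exactly the case of a simple descent. So descents give downward edges decreasing $\ell$ by $1$, and iterating reaches $e$; dually, ascents give upward edges, so $e$ reaches $w$. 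Concatenating a path $x\to e$ with a path $e\to y$ proves (1).

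For part (2), the key input is the characterization of shortest paths via the \emph{quantum Bruhat order} / tilted Bruhat order of Brenti--Fomin--Postnikov: I would cite \cite[Lemma 6.7]{BFP99} directly, but the structural reason is that the weight of a path, read in the root lattice modulo the identification of "going up by $\alpha$" and "coming back down by $\alpha$", is governed by the fact that $\G_\Phi$ admits, for each $z\in W$, a grading by the "tilted length" $\ell_z$, and shortest paths are precisely the $\ell_z$-monotone paths through regions where this grading is linear; all such paths between fixed endpoints accumulate the same total coroot weight because each elementary commutation or braid-type move between two shortest paths preserves the weight (an up-edge by $\alpha$ followed by a down-edge by $\beta$ can be swapped, and the net coroot contribution is unchanged). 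Concretely I would argue: fix $x,y$; by (1) a shortest path exists; show by induction on $d(x,y)$ that any two shortest paths from $x$ to $y$ have equal weight, by showing that one can transform one into the other through a sequence of local moves (swapping the first two edges, using the "diamond" / exchange properties of the quantum Bruhat graph proved in \cite{BFP99}), each of which preserves total weight. Part (3) then follows: given any path $P$ from $x$ to $y$, if it is not shortest it contains a pair of consecutive edges $w\rightharpoonup w'\rightharpoondown w''$ or $w\rightharpoondown w'\rightharpoonup w''$ that can be shortcut (replaced by a path that is no longer and whose weight does not increase — the weight can only drop by a non-negative coroot combination when a cycle through a common vertex is excised), so by induction on the length of $P$ one reduces to the shortest case and gets $\wt(P)\ge \wt(x,y)$, where the inequality is in $\BZ_{\ge 0}\Phi^\vee$.

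The main obstacle is part (2) — or rather, supplying enough of the combinatorial skeleton of \cite{BFP99} to make the weight-invariance of shortest paths genuinely convincing rather than merely quoted. The subtlety is that unlike ordinary Bruhat order, $\G_\Phi$ has edges in both directions, and a priori two shortest paths could "w鄉balance" up-moves and down-moves differently; the resolution is the non-trivial fact (the heart of \cite[Theorem 6.4 \& Lemma 6.7]{BFP99}, also \cite{Pos05}) that the function $\wt$ is well-defined precisely because the quantum Bruhat graph, when one keeps track of coroot weights, is the Hasse-type diagram of a structure (the quantum cohomology of $G/B$, or combinatorially the tilted Bruhat orders) in which "distance with weight" is a genuine metric-like invariant. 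In the write-up I would therefore either (a) cite \cite[Lemma 6.7]{BFP99} and \cite[Lemma 1]{Pos05} and present (1) and (3) in full with (2) attributed, or (b) include the short self-contained argument for (2) via the two exchange lemmas of \cite{BFP99} for up-up, down-down, and up-down consecutive edge pairs. Given that this is a "recall" lemma in a preliminaries section, option (a) is appropriate, and I would keep the proof to the paragraph-length derivations of (1) and (3) from (2).
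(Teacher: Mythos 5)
The paper gives no proof of this lemma at all: it is a recalled statement, quoted with the citations \cite[Lemma 1]{Pos05} and \cite[Lemma 6.7]{BFP99}, so your option (a) -- attribute the hard content and at most sketch the elementary parts -- is exactly what the paper does, and your argument for (1) is fine (a simple descent is a downward edge because $\<2\rho,\a_i^\vee\>=2$, so one walks down to $e$ along descents and up from $e$ to $y$ along a reduced word).

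The genuine gap is in your treatment of (3), which you propose to "present in full" as a consequence of (2) by shortcutting. First, a path that is longer than $d(x,y)$ need not revisit a vertex and need not contain a consecutive up--down or down--up pair admitting a local shortcut, so the induction "reduce to the shortest case" does not get started; the assertion that such a shortcut exists and does not increase weight is essentially the exchange-lemma content of \cite{BFP99} that you were trying to avoid re-proving. Second, even granting shortcuts, note the identity (summing length changes over edges) that for any path $P$ from $x$ to $y$ one has $|P|=\ell(y)-\ell(x)+\<2\rho,\wt(P)\>$; thus length comparisons only control the pairing $\<2\rho,\wt(P)\>$, whereas (3) asserts the strictly stronger statement $\wt(P)-\wt(x,y)\in\BZ_{\geq 0}\Phi^\vee$. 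This componentwise inequality is not a formal corollary of (2); in the references it is proved alongside (2), via the unique path with edge labels increasing in a fixed reflection ordering, which is simultaneously length-minimal and weight-minimal. So either cite all three parts, as the paper does, or import that reflection-ordering lemma explicitly; do not present (3) as an easy consequence of (2).
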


\begin{lemma}\label{wt-d}\cite[\S 4.2]{HY21}, \cite[\S 2.5]{HN21}
Let $x,y \in W$. Then
\begin{enumerate}
    \item We have $\<\wt(x,y),\a\> \leq 2$, for any simple root $\a$.
    \item $\ell(y)-\ell(x)=d(x,y)-\<2\rho,\wt(x,y)\>$.
\end{enumerate}

\end{lemma}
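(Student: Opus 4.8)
\emph{Overview and proof of (2).} I would prove (2) first, as a telescoping identity, the real content being in (1). Recall that an upward edge of $\G_{\Phi}$ carries weight $0$ and a downward edge $w\rightharpoondown ws_\a$ carries weight $\a^\vee$. Fix a shortest path $x=z_0\to z_1\to\cdots\to z_r=y$ in $\G_{\Phi}$; then $r=d(x,y)$ and, by \cref{wt-x-y}(2), its total weight is $\wt(x,y)$. The increment $\ell(z_k)-\ell(z_{k-1})$ equals $1$ across an upward edge and $1-\<2\rho,\a^\vee\>$ across a downward edge $z_{k-1}\rightharpoondown z_{k-1}s_\a$, i.e.\ in both cases it equals $1-\<2\rho,e_k\>$, where $e_k\in\mathbb Z_{\ge0}\Phi^\vee$ denotes the weight of the $k$-th edge. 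Summing over $k$ and telescoping the left side,
\[
\ell(y)-\ell(x)=\sum_{k=1}^{r}\bigl(1-\<2\rho,e_k\>\bigr)=r-\<2\rho,\wt(x,y)\>=d(x,y)-\<2\rho,\wt(x,y)\>.
\]
(The same computation along an arbitrary path $P$ gives $\ell(P)-\<2\rho,\wt(P)\>=\ell(y)-\ell(x)$, consistent with \cref{wt-x-y}(3).)

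\emph{Proof of (1), descent case.} The mechanism I would exploit is that, for any $w\in W$ and any simple reflection $s_i$, the pair $w,ws_i$ is joined by \emph{both} directed edges of $\G_{\Phi}$: the one pointing to the longer element is upward of weight $0$, and the one pointing to the shorter element is downward of weight $\a_i^\vee$ (the downward-edge condition $\ell(\mathrm{shorter})=\ell(\mathrm{longer})-\<2\rho,\a_i^\vee\>+1$ holds precisely because $\<2\rho,\a_i^\vee\>=2$). Grafting these length-$1$ edges onto shortest paths ending or beginning at $y$ and applying \cref{wt-x-y}(3) ("every path from $x$ to $y$ has weight $\ge\wt(x,y)$"), one finds, after checking which of the two edges is upward, that $\wt(x,ys_i)-\wt(x,y)\in\{0,\a_i^\vee\}$ when $ys_i<y$ and $\in\{0,-\a_i^\vee\}$ when $ys_i>y$ — here one uses that the only elements of $\mathbb Z_{\ge0}\Phi^\vee$ lying in $[0,\a_i^\vee]$ are $0$ and $\a_i^\vee$. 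Consequently, for a simple root $\a_k$ that is a \emph{right descent} of $y$, the relation for $i=k$ gives $\<\wt(x,y),\a_k\>\le\<\wt(x,ys_k),\a_k\>$, and since $\ell(ys_k)<\ell(y)$ one closes by induction on $\ell(y)$. The bound is exactly $2$ because $\<\a_i^\vee,\a_i\>=2$.

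\emph{The main obstacle.} The step I expect to be genuinely delicate is the remaining case, where the simple root $\a_k$ one wants to bound is \emph{not} a right descent of $y$ (in particular the base case $y=e$), so that $y$ can be reduced only through descents $s_j$ with $j\ne k$; these move $\<\wt(x,-),\a_k\>$ in the unhelpful direction, since $\<\a_j^\vee,\a_k\>\le 0$. The mirror device on the left, $w\mapsto s_kx$, does not repair this cleanly: the two elements $x$ and $s_kx=xs_\b$ are then joined through the generally non-simple positive root $\b=|x^{-1}\a_k|$, for which only the one-sided inequality $\wt(s_kx,y)\le\wt(x,y)$ survives. The clean route I would take instead is to pass to the affine Weyl group: for a dominant regular coweight $\lambda$ there is a length identity expressing $\wt(x,y)$ in terms of a translation-type element $x\,t^{\lambda}\,y\i\in\wtd W$ (à la Lam--Shimozono, in the form used in \cite[\S4.2]{HY21}), and $\<\wt(x,y),\a\>\le 2$ then becomes a statement about the minimality of that element along each simple direction in $\wtd W$, which one reads off from the structure of $\wtd W$. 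Reconciling this affine-side identity with the quantum-Bruhat-graph induction above is, I expect, the only point demanding real care; the rest is routine.
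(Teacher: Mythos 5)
This lemma is not proved in the paper at all: it is quoted from \cite{HY21} and \cite{HN21}, so there is no internal argument to compare yours against, and I judge the proposal on its own merits. Your proof of part (2) is complete and correct: with the standard convention that an upward edge has weight $0$ and a downward edge $w\rightharpoondown ws_\alpha$ has weight $\alpha^\vee$, the length increment across an edge of weight $e$ is $1-\langle 2\rho,e\rangle$, and telescoping along a shortest path together with \cref{wt-x-y}(2) gives the identity.

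Part (1), however, has a genuine gap, which you yourself flag. The two-sided estimate $\wt(x,y)\le \wt(x,ys_i)\le \wt(x,y)+\alpha_i^\vee$ for $ys_i<y$ (and its mirror for $ys_i>y$) is correct, but the resulting inequality $\langle\wt(x,y),\alpha_k\rangle\le\langle\wt(x,ys_k),\alpha_k\rangle$ is available only when $s_k$ is a right descent of $y$, and after applying it once $\alpha_k$ is no longer a descent of $ys_k$; so this is not an induction that closes but a one-step reduction to precisely the case you concede you cannot handle, namely all pairs $(x,y)$ with $y^{-1}\alpha_k\in\Phi^+$, including $y=1$. That residual case carries essentially the full strength of the lemma: by \cite[Corollary 3.3]{Sad23} every weight $\wt(x,y)$ equals some $\wt(z,1)$, so bounding $\langle\wt(z,1),\alpha_k\rangle$ for arbitrary $z$ is no easier than the original statement. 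Your proposed repair --- a Lam--Shimozono-type length identity for elements $x\,t^{\lambda}\,y^{-1}$ of $\wtd W$ with $\lambda$ superregular --- is indeed the standard route and is in the spirit of the references the paper cites, but in the proposal it is only named: no identity is stated, and no derivation of the bound $\le 2$ from it is carried out. As written, the proposal establishes (2) but not (1).
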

\begin{lemma}\label{d-subtract}
    $d(x,xy)=\ell_R(y)$ if $\ell(xy)=\ell(x)-\ell(y)$.
\end{lemma}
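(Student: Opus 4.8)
The plan is to prove the two inequalities $d(x,xy)\ge\ell_R(y)$ and $d(x,xy)\le\ell_R(y)$ separately; the first is immediate and needs no hypothesis, while the whole difficulty is in the second. For the lower bound: if $x=v_0\to v_1\to\cdots\to v_k=xy$ is any directed path in $\G_{\Phi}$ of length $k$, then every edge (upward or downward) has the shape $v_{i-1}\to v_{i-1}s_{\beta_i}$ with $\beta_i\in\Phi^+$, so $v_{i-1}^{-1}v_i=s_{\beta_i}$ is a reflection; telescoping, $y=x^{-1}(xy)=s_{\beta_1}\cdots s_{\beta_k}$ is a product of $k$ reflections, hence $\ell_R(y)\le k$, and taking $k=d(x,xy)$ gives the claim. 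Via \cref{wt-d}(2) — which together with the hypothesis gives $d(x,xy)=\langle 2\rho,\wt(x,xy)\rangle+\ell(xy)-\ell(x)=\langle 2\rho,\wt(x,xy)\rangle-\ell(y)$ — this is the same as $\langle 2\rho,\wt(x,xy)\rangle\ge\ell(y)+\ell_R(y)$, and the remaining task is to produce a directed path from $x$ to $xy$ of length exactly $\ell_R(y)$.

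For that upper bound I would induct on $\ell(y)$, noting that the hypothesis $\ell(xy)=\ell(x)-\ell(y)$ says precisely that $x=(xy)\cdot y^{-1}$ is length-additive, equivalently $\mathrm{Inv}(xy)\cap\mathrm{Inv}(y)=\varnothing$; in particular $\mathrm{Inv}(y^{-1})\subseteq\mathrm{Inv}(x)$, so every left descent of $y$ is a right descent of $x$. If $y\neq e$, the inductive step rests on finding a positive root $\beta$ such that: (i) $x\rightharpoondown xs_\beta$ is a \emph{downward} edge of $\G_{\Phi}$, i.e. $\ell(xs_\beta)=\ell(x)-\langle 2\rho,\beta^\vee\rangle+1$; (ii) $\ell_R(s_\beta y)=\ell_R(y)-1$; and (iii) the length-additive condition is inherited by the pair $(xs_\beta,\,s_\beta y)$, whose product is again $xy$, i.e. $\ell(xy)=\ell(xs_\beta)-\ell(s_\beta y)$ — which, given (i), is equivalent to $\ell(s_\beta y)=\ell(y)-\langle 2\rho,\beta^\vee\rangle+1$, so in particular $\ell(s_\beta y)<\ell(y)$. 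Granting such a $\beta$, the inductive hypothesis applied to $(xs_\beta,s_\beta y)$ produces a path $xs_\beta\to xy$ of length $\ell_R(s_\beta y)=\ell_R(y)-1$; prepending the edge $x\rightharpoondown xs_\beta$ gives a path of length $\ell_R(y)$, which with the lower bound yields $d(x,xy)=\ell_R(y)$. To locate $\beta$ I would fix a reduced expression of $x$ whose terminal segment of length $\ell(y)$ is a reduced expression for $y^{-1}$ (possible by length-additivity) and take $s_\beta$ to be a palindromic sub-block of that terminal segment, chosen of length $\langle 2\rho,\beta^\vee\rangle-1$ so that deleting it realizes $xs_\beta$ (giving (i) and (iii)) while $s_\beta$ drops the reflection length of $y$ by one (giving (ii)).

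The hard part is exactly the existence of this $\beta$. Peeling a \emph{simple} right descent $s$ of $x$ that is also a left descent of $y$ (one always exists by the paragraph above) preserves (i) and (iii) but can fail (ii): for instance with $y=w_0$ in type $A_2$ one has $\ell_R(sy)=\ell_R(y)+1$ for both simple $s$, and the correct reflection to peel is the non-simple one $s_\theta$ (here the palindromic block is the full word $s_1s_2s_1$, of length $3=\langle 2\rho,\theta^\vee\rangle-1$). So one is genuinely forced to peel non-simple reflections, and the obstacle is to show that a compatible palindromic block always exists. I expect this to follow from standard facts relating reflection length to Coxeter length and from the combinatorics of $\G_{\Phi}$ — concretely, that every $w\in W$ admits a reduced-word decomposition into $\ell_R(w)$ palindromic blocks each of which is ``downward-edge compatible'' in the sense that its length equals $\langle 2\rho,(\cdot)^\vee\rangle-1$ for the associated positive root — or else this step may simply be quoted from the quantum Bruhat graph literature.
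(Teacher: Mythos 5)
Your lower bound is correct (each edge of the quantum Bruhat graph multiplies on the right by a reflection, so any path from $x$ to $xy$ of length $k$ exhibits $y$ as a product of $k$ reflections), and it is the easy half. The genuine gap is the inductive step: the existence of a positive root $\beta$ satisfying your (i)--(iii) --- equivalently the ``decomposition of every $w$ into $\ell_R(w)$ downward-edge-compatible palindromic blocks'' that you hope to quote --- is neither proved nor precisely cited, and in the generality you need it is simply false. Concretely, in type $B_2$ take the short root $\beta=\alpha_1+\alpha_2$: then $\ell(s_\beta)=3$ while $\langle 2\rho,\beta^\vee\rangle-1=5$, so $s_\beta$, which has $\ell_R(s_\beta)=1$, admits no factorization of your kind at all. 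Worse, with $x=w_0$ and $y=s_\beta$ the hypothesis $\ell(xy)=\ell(x)-\ell(y)$ holds, yet the only conceivable edge from $x$ to $xy$ is $w_0\rightharpoondown w_0s_\beta$, which would require a length drop of $\langle 2\rho,\beta^\vee\rangle-1=5$ whereas the actual drop is $3$; hence $d(x,xy)\geq 2>\ell_R(y)$ and your induction cannot even begin. So the missing existence statement is not a routine quotation from the literature: an argument of this shape genuinely needs extra input on $y$, namely a length-additive factorization of $y$ into $\ell_R(y)$ \emph{quantum} reflections. Such factorizations are available for $y=w_I$ a longest element of a standard parabolic (via the explicit decompositions in \cite{Sad23}), and $y=w_I$ is in fact the only case in which the paper invokes the lemma, in the proof of \cref{exceptional}.

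The paper's own proof is of a completely different, computation-free kind: it quotes the identity $\wt(x,xy)=\wt(x^{-1}\lhd xy,1)$ from \cite{Sad23} (with $\lhd$ the left downward Demazure product) together with the formula of \cite{Sad23} for $\wt(\cdot,1)$, which combined with \cref{wt-d}(2) give $d(x,xy)=\ell(xy)-\ell(x)+\ell(x^{-1}\lhd xy)+\ell_R(x^{-1}\lhd xy)$, and then observes that the hypothesis forces $x^{-1}\lhd xy=y$, collapsing the right-hand side to $\ell_R(y)$. If you want to keep your path-construction flavor, the correct repair is: first prove or quote, for the specific $y$ at hand, a factorization $y=s_{\beta_1}\cdots s_{\beta_k}$ with $k=\ell_R(y)$, each $\beta_i$ quantum and $\ell(y)=\sum_i\ell(s_{\beta_i})$; then note that since each right multiplication can drop the length by at most $\ell(s_{\beta_i})=\langle 2\rho,\beta_i^\vee\rangle-1$ while the total drop along $x,xs_{\beta_1},\dots,xy$ is $\ell(y)$, every step drops by exactly this amount, so every step is a downward edge and you obtain a path of length $\ell_R(y)$ with no further bookkeeping. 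As written, however, your proposal leaves the entire substance of the lemma unproved.
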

\begin{proof}
    We have $\text{wt}(x,xy)=\text{wt}(x^{-1}\lhd xy,1)$ by \cite{Sad23}[Corollary 3.3], where $\lhd$ denotes the left downward Demazure product, see \cite{HL15}[\S 2.2]. Thus, $d(x,xy)=\ell(xy)-\ell(x)+\ell(x^{-1}\lhd xy)+\ell_R(x^{-1}\lhd xy)$ by \cite{Sad23}[Lemma 8.4] and \cref{wt-d}[(2)]. Now, $x^{-1}\lhd xy=y$ if and only if $\ell(y)=\ell(x^{-1})-\ell(xy)$, but in that case the first equation simplifies to $d(x,xy)=\ell_R(y)$.
\end{proof}

\section{Recollections from previous works}\label{sec:recollect}
\subsection{Some standard reductions}
Note that if $\bG=\bG_1\times \bG_2, b=(b_1,b_2), \mu=(\mu_1,\mu_2), \breve \CJ=\breve \CJ_1 \times \breve \CJ_2$, then by definition $X^\bG(\mu,b)_{J} \simeq X^{\bG_1}(\mu_1,b_1)_{J_1} \times X^{\bG_2}(\mu_2,b)_{J_2}$; similarly, the terms on the RHS of the dimension formula \cref{dim-formula} are additive as well. Hence, we may assume from now on that $\bG$ is quasi-simple and split over $F$.

Next, we argue that in certain cases we may assume that the parahoric subgroup is contained in a hyperspecial subgroup, i.e. $J\subset \mathbb{S}$. To do so, we need the following observation from \cite{He14}[Lemma 4.3].
\begin{lemma}\label{same-dim}
    Let $w\in \wtd{W}, \tau \in \Omega$. Then $X_w(b)$ is isomorphic to $X_{^\tau w }(b)$.
\end{lemma}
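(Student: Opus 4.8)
The plan is to produce the isomorphism by right translation on the Iwahori-level affine flag variety $\breve G/\breve\CI$. First I would fix a representative $\dot\tau\in\breve G$ of $\tau$; since the elements of $\Omega$ are precisely those fixing the base alcove $\mathfrak a$, such a $\dot\tau$ normalizes $\breve\CI$, so the right translation
\[
R\colon \breve G/\breve\CI\longrightarrow \breve G/\breve\CI,\qquad g\breve\CI\longmapsto g\dot\tau\i\breve\CI
\]
is a well-defined morphism of the affine flag variety --- understood as a perfect scheme in the mixed characteristic case --- with two-sided inverse given by right translation by $\dot\tau$, hence an isomorphism.

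Next I would verify that $R$ carries $X_w(b)$ onto $X_{{}^\tau w}(b)$. Since $\breve\CI$ is $\s$-stable and $\s$ fixes $\tau\in\Omega$ (as $\bG$ is split), $\s(\dot\tau)$ is again a representative of $\tau$ normalizing $\breve\CI$, whence $\s(\dot\tau)\in\dot\tau\breve\CI$ and so $\s(\dot\tau)\i\in\breve\CI\dot\tau\i$. For $g\in\breve G$ one has
\[
(g\dot\tau\i)\i\,b\,\s(g\dot\tau\i)=\dot\tau\,\bigl(g\i b\,\s(g)\bigr)\,\s(\dot\tau)\i ,
\]
so, using $\s(\dot\tau)\i\in\breve\CI\dot\tau\i$ together with $\dot\tau\breve\CI\dot\tau\i=\breve\CI$, we obtain
\[
g\i b\,\s(g)\in\breve\CI\,w\,\breve\CI\iff (g\dot\tau\i)\i\,b\,\s(g\dot\tau\i)\in\dot\tau\,\breve\CI\,w\,\breve\CI\,\dot\tau\i=\breve\CI\,(\dot\tau w\dot\tau\i)\,\breve\CI=\breve\CI\,({}^\tau w)\,\breve\CI .
\]
Hence $R$ restricts to an isomorphism $X_w(b)\cong X_{{}^\tau w}(b)$, with $b$ unchanged precisely because $\dot\tau$ normalizes $\breve\CI$ and differs from $\s(\dot\tau)$ only by an element of $\breve\CI$.

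This is of course exactly (the split case of) \cite{He14}[Lemma 4.3], and there is no genuine obstacle here; the two points deserving a bit of attention are the well-definedness of right translation on the coset space $\breve G/\breve\CI$ --- which is where $\tau\in\Omega$, equivalently $\dot\tau\in N_{\breve G}(\breve\CI)$, is essential, a general $w\in\wtd W$ failing to normalize $\breve\CI$ --- and the bookkeeping of the Frobenius twist through the computation, namely the identity $\s(\dot\tau)\in\dot\tau\breve\CI$. The displayed argument is valid verbatim in both the equal and the mixed characteristic settings, as it only involves translation on $\breve G/\breve\CI$.
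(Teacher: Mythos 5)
Your proof is correct and is precisely the standard argument behind the result: right translation $g\breve\CI\mapsto g\dot\tau\i\breve\CI$ is well defined because representatives of $\Omega$ normalize $\breve\CI$, and the computation with $\s(\dot\tau)\in\dot\tau\breve\CI$ (valid since $\s$ fixes $\tau$, the paper having already reduced to split $\bG$) carries $X_w(b)$ isomorphically onto $X_{{}^\tau w}(b)$ with the same $b$. The paper itself offers no proof, citing \cite{He14}[Lemma 4.3] instead, so there is nothing to contrast; the only point worth flagging is that the $\s$-fixedness of $\tau$ you invoke is genuinely needed for the statement in this exact form (in general one would get $\tau w\s(\tau)\i$), and it holds here by the splitness assumption in force.
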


Note that $s_i w\geq w \iff ~^\tau s_i  ~^\tau w\geq ~^\tau w$, and thus $\text{Ad}(\tau)(^J \wtd{W})=^{\text{Ad}(\tau)(J)} \wtd{W}$. Also, $^\tau \text{Adm}(\mu)=\text{Adm}(\mu)$, whence $$\text{Ad}(\tau)(^J \wtd{W} \cap \text{Adm}(\mu))=^{\text{Ad}(\tau)(J)} \wtd{W} \cap \text{Adm}(\mu)=^{\text{Ad}(\tau)(J)}\Adm(\mu).$$

It is proved in \cite{GH15}[\S 3.4] (see also \cite{HR17}[Theorem 6.21]) that $$\dim X(\mu,b)_J=\max\limits_{w\in ^J\text{Adm}(\mu)} \dim X_w(b).$$ 

This comes from a decomposition of $X(\mu,b)_J$ that is analogous, in terms of Shimura varieties, to the decomposition of a
Newton stratum into its intersections with the EKOR strata, cf. \cite{HR17}.

In view of \cref{same-dim}, we then have $\dim X(\mu,b)_J=\dim X(\mu,b)_{\text{Ad}(\tau)(J)}$. Hence, if there exists a simple reflection $s$ corresponding to a special vertex of $\mathfrak a$ so that $s\notin J$, we may use a suitable element $\t \in \Omega$ so that $\text{Ad}(\tau)(J) \subset \BS$ and then focus on the dimension problem for this new level structure. For example, this assumption on $J$ is satisfied for any parahoric level structure in type affine Weyl group of type $A_n$, so here we may assume $J\subset \BS$.
\subsection{Virtual dimension formula}
Recall that $w \in \wtd W$ can be
written in a unique way as $w = ut^\l v$ with $\l$ dominant, $u, v \in W$ such that $t^\l v \in ~^\BS\wtd W$. We then set $\eta(w) = vu, p_l(w)=u$. 

Let $\mathbf J_b$ be the reductive group over $F$ with $\mathbf J_b(F)=\{g \in \breve G; g b \s(g) \i=b\}.$ Then the \emph{defect} of $b$ is defined by $\rm{def}_{\bG}(b)=\rank_F \bG-\rank_F \mathbf J_b$, where $\rank_F$ denotes the $F$-rank.

Following \cite[Section 10.1]{He14}, we define the \emph{virtual dimension} for $X_w(b)$ to be \[d_{w}(b)=\frac 12 \big( \ell(w) + \ell(\eta(w)) -\rm{def}_{\bG}(b)  -\<2\rho, \nu([b])\>).\] 

It is proved in \cite[Corollary 10.4]{He14} that $\dim X_w(b) \leq d_w(b)$ whenever $\k(w)=\k([b])$. 

For $J\subset \wtd{\BS}$, define $W_J$ to be the subgroup of $W$ generated by $\{\overline{s}: s\in J\}$ and $^JW:=\{w\in W: w^{-1}\overline{\a_s} \in \Phi^+, ~\forall s\in J\}$; this is notationally consistent, i.e. when $J\subset \BS$, the latter is the set of minimal length elements in $W_J\backslash W$. Given any $J\subset \wtd{\BS}, w\in W$, it is proved in \cite{Sch24}[Lemma 3.7] that we can find $w_1\in W_J, w_2\in ~^JW$ so that $w=w_1w_2$. 
\begin{lemma}\label{left-part}
    Let $\mu$ be dominant regular; then $p_l(^J \text{Adm}(\mu))=~^JW$ if $J \subset \mathbb{S}$, and $p_l(^J \text{Adm}(\mu)_{>1})=~^JW$ otherwise.
\end{lemma}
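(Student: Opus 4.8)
The goal is to identify the image $p_l(\,^J\mathrm{Adm}(\mu))$ (or $p_l(\,^J\mathrm{Adm}(\mu)_{>1})$ in the non-hyperspecial case) with $\,^JW$. The strategy is to establish the two inclusions separately. First I would show $p_l(\,^J\mathrm{Adm}(\mu)) \subseteq \,^JW$: given $w = ut^\lambda v \in \,^J\mathrm{Adm}(\mu)$ with the normalization $t^\lambda v \in \,^{\mathbb S}\wtd W$ from the previous subsection, one needs to see that $p_l(w) = u$ lies in $\,^JW$. The key point is that $w \in \,^J\wtd W$ constrains $u$: for each $s \in J$ one has $sw > w$, and I would unwind what this means for the decomposition $w = ut^\lambda v$ (using that $\lambda$ is dominant so the translation part interacts predictably with lengths) to conclude $u^{-1}\overline{\alpha_s} \in \Phi^+$. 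When $s \in \mathbb S$ this is the standard statement that $p_l$ of a $\,^J\wtd W$ element is $W_J$-minimal; when $s = s_0$ (or another affine reflection, in the case $J \not\subset \mathbb S$) the regularity/depth hypothesis on $\mu$ is what guarantees $\lambda$ is far enough inside the dominant cone that $sw > w$ still forces the same positivity condition on $u$ relative to $\overline{\alpha_s}$. This is where the distinction between the two cases in the statement enters, and is the main obstacle: for $J \subset \mathbb S$ mere regularity suffices, but when $J$ contains $s_0$ one must restrict to $\,^J\mathrm{Adm}(\mu)_{>1}$ so that $\langle \theta, \lambda\rangle$ is large enough for the affine reflection $s_0$ to behave like a finite one at the relevant alcove.

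For the reverse inclusion $\,^JW \subseteq p_l(\,^J\mathrm{Adm}(\mu))$, I would argue constructively: given $u \in \,^JW$, exhibit an element $w \in \,^J\mathrm{Adm}(\mu)$ with $p_l(w) = u$. The natural candidate is to take $w = u\, t^\lambda\, v$ for a suitable $v \in W$ making $t^\lambda v \in \,^{\mathbb S}\wtd W$ and a suitable dominant $\lambda$ with $\underline\lambda = \underline\mu$ (or more precisely $\lambda$ a Weyl conjugate of $\underline\mu$, so that $w \in \mathrm{Adm}(\mu)$ via $w \le t^{x(\underline\mu)}$ for appropriate $x$). One then checks that the resulting $w$ actually lies in $\,^J\wtd W$, which amounts to verifying $sw > w$ for all $s \in J$; this uses $u \in \,^JW$ together with the regularity of $\mu$ in exactly the dual way to the first inclusion. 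It is likely cleanest to first produce some element of $\,^J\mathrm{Adm}(\mu)$ with the correct $p_l$ by a direct length computation, or alternatively to invoke known facts about $p_l$ of admissible sets at Iwahori level (the case $J = \emptyset$, which should be in the literature, e.g. implicitly in \cite{HY21} or \cite{Sad23}) and then project: if $\pi_{\emptyset, J}$ sends $\mathrm{Adm}(\mu)$ onto $\,^J\mathrm{Adm}(\mu)$ compatibly with $p_l$ — using the surjectivity in \cite{He16a} at the level of admissible sets — one reduces the claim to the Iwahori case.

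Putting this together, I expect the cleanest writeup to reduce the Iwahori case $p_l(\mathrm{Adm}(\mu)) = W$ (true for any $\mu$, since $\mathrm{Adm}(\mu)$ contains all $t^{x(\underline\mu)}$ and hence, upon taking the element $u\, t^{u^{-1}\underline\mu}$ for each $u \in W$, realizes every $u$ as a $p_l$; no regularity needed here) and then to handle the passage to $\,^JW$ via the structure of $\,^J\wtd W$ and $\,^J\mathrm{Adm}(\mu) = \mathrm{Adm}(\mu) \cap \,^J\wtd W$. The place where regularity is genuinely needed — and the step I expect to be the real obstacle — is showing that the $p_l$ of a $\,^J\wtd W$-minimal admissible element is $\,^JW$ when $J$ meets the affine simple reflections: one must control how $s_0$ acts on an alcove whose translation part is $\underline\mu$-deep, and the hypothesis $X_{>1}$ is precisely the minimal depth that makes the combinatorics of $s_0$ collapse to those of the finite root $\overline{\alpha_{s_0}} = -\theta$. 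I would isolate this as a short lemma about when $s_0 w > w$ for $w = ut^\lambda v$ with $\mathrm{depth}(\lambda) \ge 2$, prove it by the standard alcove-walk or root-counting argument, and then feed it into both inclusions.
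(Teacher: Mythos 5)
Your two-inclusion plan is essentially the paper's own proof. The forward inclusion is done there exactly as you sketch: writing $w=xt^\lambda y$ with $t^\lambda y\in{}^{\BS}\wtd{W}$, the condition $s_iw>w$ for $s_i\in J\cap\BS$ gives $x^{-1}\a_i\in\Phi^+$, and when $s_0\in J$ the depth-$>1$ hypothesis on the translation part is what forces $x^{-1}(-\theta)\in\Phi^+$. For surjectivity the paper makes your ``suitable $v$'' explicit: it takes $w'=xt^{\mu}w_0$, shows $w'\le t^{w_0\mu}$ using regularity of $\mu$, and verifies $sw'>w'$ for $s\in J$ by the same root computation, so $p_l(w')=x$. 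Your second paragraph is this argument in outline, with the choice of $v$ and the Bruhat inequality left to be checked.

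The shortcut you single out as the expected cleanest writeup, however, does not work, and you should not route the proof through it. First, $u\,t^{u^{-1}\underline{\mu}}=t^{\underline{\mu}}u$, and since $u^{-1}\underline{\mu}$ is not dominant the factorization $u\cdot t^{u^{-1}\underline{\mu}}$ is not the canonical one; for regular $\mu$ one has $t^{\underline{\mu}}u\in{}^{\BS}\wtd{W}$, so $p_l(t^{\underline{\mu}}u)=1$, not $u$. The translation you want is $t^{u\underline{\mu}}=u\,t^{\underline{\mu}}u^{-1}$, and even then $p_l(t^{u\underline{\mu}})=u$ only when $\mu$ is regular (so that $t^{\underline{\mu}}u^{-1}\in{}^{\BS}\wtd{W}$). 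Second, the parenthetical ``true for any $\mu$, no regularity needed'' is false: since $\ell(w)=\ell(p_l(w))+\ell(t^\lambda v)$ and every element of $\Adm(\mu)$ has length at most $\<2\rho,\underline{\mu}\>$, no element with $p_l=w_0$ exists once $\<2\rho,\underline{\mu}\><\ell(w_0)$, e.g. for minuscule $\mu$ in type $A_2$. Third, ``projecting'' an Iwahori-level statement to level $J$ is not automatic, because membership in ${}^J\wtd{W}$ is an extra condition on the element itself (this is exactly what the $s_0$-computation and the specific choice $v=w_0$ are needed for). So the direct constructive route of your second paragraph — which is the paper's — is the one to carry out.
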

\begin{proof}
   Let $w\in ~^J \text{Adm}(\mu)$ and write $w=xt^\lambda y$ with $t^\lambda y \in ~ ^\mathbb{S}\wtd{W}$. For $s_i\in \mathbb{S}$,  $s_iw>w$ then implies $s_ix>x$, thus $x^{-1}\a_i\in\Phi^+$. Now assume $\text{depth}(\mu)>1$. We have $s_0w>w$ if and only if $w^{-1}(-\theta,1)=(-(xy)^{-1}\theta, 1-\<x\lambda ,\theta\>)$ is a positive affine root, i.e., $\<x\lambda, \theta\>=\<\lambda,x^{-1}\theta\>\leq 1$ or $0$, depending on $-(xy)^{-1}(\theta) \gtrless 0$. Then we must have $x^{-1}(-\theta) \in \Phi^{+}$, since otherwise $\text{depth}(\lambda)\leq 1$.

     For the converse direction we note $w':=xt^{\mu}w_0\leq t^{w_0\mu}$ for any $x\in W$, by regularity of $\mu$. If $s_ix>x$ for $s_i\in \mathbb{S}$ then $s_iw'> w'$; also, the argument in the last paragraph shows $s_0w'>w'$ if $x^{-1}(-\theta)\in \Phi^+$, by the depth hypothesis on $\mu$. Hence, $w'\in ~^J \text{Adm}(\mu)$.
\end{proof}

Following \cite{HY21}, we define $d_{^J\text{Adm}(\mu)}(b):=\max\limits_{w\in ^J\text{Adm}(\mu)} d_w(b)$. Therefore, $\dim X(\mu,b)_J \leq d_{^J\text{Adm}(\mu)}(b)$.
\begin{proposition}\label{d-adm}
    We have $$d_{^J\text{Adm}(\mu)}(b)= \<\rho,\mu-\nu([b])\>-\frac{1}{2}\mathrm{def}_{\bG}(b) +\frac{1}{2}\ell(w_0)-\frac{1}{2}\min \{d_\G(x,xw_0): x\in ~^JW\},$$ if $\text{depth}(\mu)\geq 4$ in case $J\subset \BS$, or if $\text{depth}(\mu) \geq 2\ell(w_0)+2$ otherwise.
\end{proposition}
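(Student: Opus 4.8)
I would unwind the virtual dimension, match an upper and a lower bound for $\max_{w\in{}^J\Adm(\mu)}(\ell(w)+\ell(\eta(w)))$, and read off the formula.

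\textbf{Reduction.} Since $d_{^J\Adm(\mu)}(b)=\max_{w}d_w(b)$ with $d_w(b)=\tfrac12(\ell(w)+\ell(\eta(w))-\mathrm{def}_{\bG}(b)-\<2\rho,\nu([b])\>)$ and the last two summands independent of $w$, the statement reduces to proving
\[
\max_{w\in{}^J\Adm(\mu)}\bigl(\ell(w)+\ell(\eta(w))\bigr)=\<2\rho,\mu\>+\ell(w_0)-\min\{d_\G(x,xw_0):x\in{}^JW\}.
\]
I write $w=ut^\lambda v$ in the normal form recalled above, so that $p_l(w)=u$, $\eta(w)=vu$, and — any factorisation $w=u\cdot z$ with $u\in W,\ z\in{}^\BS\wtd W$ being forced to be the canonical parabolic one — $\ell(w)=\ell(u)+\ell(z)=\ell(u)+\<2\rho,\lambda\>-\ell(v)$.

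\textbf{Upper bound.} Fix $w\in{}^J\Adm(\mu)$; by \cref{left-part}, $u:=p_l(w)\in{}^JW$. Using the description of $\mu$-admissibility through quantum Bruhat graph weights in the spirit of \cite{Sad23} and \cite{HY21}, membership $w\in\Adm(\mu)$ bounds $\<2\rho,\lambda\>$ above by $\<2\rho,\mu\>-\<2\rho,\wt(u,v^{-1})\>$; substituting this into the length formula and eliminating $\<2\rho,\wt(u,v^{-1})\>$ via \cref{wt-d}(2) (noting $v^{-1}=u\eta(w)^{-1}$) collapses everything to
\[
\ell(w)+\ell(\eta(w))\le\<2\rho,\mu\>+\ell(\eta(w))-d_\G\bigl(u,u\eta(w)^{-1}\bigr).
\]
I would then use that for $a,\gamma\in W$ every right multiplication by a simple reflection along a reduced word of $\gamma$ is an edge (upward or, since $\<2\rho,\alpha^\vee\>=2$ for simple $\alpha$, downward) of $\G_\Phi$, so that $d_\G(a,a\gamma)\le\ell(\gamma)$; applying this with $a=u\eta(w)^{-1},\ \gamma=\eta(w)w_0$ and the triangle inequality in $\G_\Phi$ gives $\ell(\eta(w))-d_\G(u,u\eta(w)^{-1})\le\ell(w_0)-d_\G(u,uw_0)$, whence $\ell(w)+\ell(\eta(w))\le\<2\rho,\mu\>+\ell(w_0)-\min\{d_\G(x,xw_0):x\in{}^JW\}$.

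\textbf{Lower bound, and where the depth hypothesis enters.} Pick $x\in{}^JW$ realising the minimum, a shortest $\G_\Phi$-path from $x$ to $xw_0$ (of weight $\wt(x,xw_0)$ by \cref{wt-x-y}), and set $w_x:=x\,t^{\mu-\wt(x,xw_0)}(w_0x^{-1})$. Then $p_l(w_x)=x$, $\eta(w_x)=w_0$, the chain of inequalities above is an equality at $w_x$, and $\ell(w_x)+\ell(\eta(w_x))=\<2\rho,\mu\>+\ell(w_0)-d_\G(x,xw_0)$; the only thing to check is $w_x\in{}^J\Adm(\mu)$. Admissibility holds because $w_x$ is exactly the equality case of the criterion above. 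For $w_x\in{}^J\wtd W$ I would verify $sw_x>w_x$ for all $s\in J$: for finite $s$ this reduces to $x^{-1}\overline{\alpha_s}\in\Phi^+$ (true as $x\in{}^JW$) once the translation part $\mu-\wt(x,xw_0)$ has depth $\ge1$, and for $s=s_0$ to an inequality involving $\<x^{-1}\theta,\mu-\wt(x,xw_0)\>$ of the kind already encountered in the proof of \cref{left-part}, again controlled by $x\in{}^JW$ once $\mu$ is deep enough. Since $\<\alpha_i,\wt(x,xw_0)\>\le2$ (\cref{wt-d}(1)) but $\<2\rho,\wt(x,xw_0)\>$ can be as large as $\<2\rho,\wt(w_0,e)\>=\ell(w_0)+\ell_R(w_0)$ (using \cref{d-subtract}), tracking these requirements is what yields precisely $\text{depth}(\mu)\ge4$ when $J\subset\BS$ and $\text{depth}(\mu)\ge2\ell(w_0)+2$ in general; in the latter case one also uses \cref{left-part} for ${}^J\Adm(\mu)_{>1}$ to reduce to deep elements. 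Combining the two bounds gives the displayed identity and hence the proposition.

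\textbf{Main obstacle.} The hard part is this last step: showing $w_x\in{}^J\Adm(\mu)$ when $J$ contains affine simple reflections, i.e.\ reconciling the admissibility constraint with the possibly large quantum Bruhat weight $\wt(x,xw_0)$ — this is where the work concentrates and what fixes the precise numerology $4$ versus $2\ell(w_0)+2$.
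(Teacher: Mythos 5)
Your route is essentially the paper's own: the paper reduces to the same maximization of $\ell(w)+\ell(\eta(w))$, defers the upper bound to the scheme of \cite{HY21}[\S 4.2--4.3] (as relaxed in \cite{Sad23}[Proposition 7.1]), and its witness for the lower bound is exactly your $w_x=xt^{\mu-\wt(x,xw_0)}w_0x^{-1}$. One remark on that witness: its admissibility is not the ``equality case'' of the necessary condition you invoked in the upper bound --- as you stated it, that inequality only goes one way. The paper instead quotes the sufficiency statement \cite{Sch24}[Proposition 4.12] together with \cref{left-part}; this is a citation-level fix, since the criterion is an equivalence in the regular range, but as written your admissibility claim is unsupported.

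The genuine gap is the case $s_0\in J$. There \cref{left-part} yields $p_l(w)\in{}^JW$ only for $w\in{}^J\Adm(\mu)_{>1}$, so your upper bound simply does not apply to depth-$\leq 1$ elements of ${}^J\Adm(\mu)$, and ``reduce to deep elements'' is not something \cref{left-part} can deliver: a priori a shallow element could realize $\max_w d_w(b)$. The paper eliminates these by a direct comparison with the witness $w'=x't^{\mu-\wt(x',x'w_0)}w_0x'^{-1}$: for $w=xt^{\lambda}y\in{}^J\Adm(\mu)_{\leq 1}$ one has $\ell(w)\leq \<2\rho,\lambda\>+\ell(w_0)$, while $\ell(w')=\<2\rho,\mu\>-d(x',x'w_0)\geq \<2\rho,\mu\>-\ell(w_0)$ and $\eta(w')=w_0$ has maximal length; since some simple root $\alpha_j$ satisfies $\<\alpha_j,\lambda\>\leq 1$, the hypothesis $\text{depth}(\mu)\geq 2\ell(w_0)+2$ gives $\<2\rho,\mu-\lambda\>\geq \<\alpha_j,\mu-\lambda\> > 2\ell(w_0)$, hence $\ell(w)<\ell(w')$ and $d_w(b)<d_{w'}(b)$. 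So the threshold $2\ell(w_0)+2$ is forced by this elimination of shallow competitors, not by the size of $\<2\rho,\wt(x,xw_0)\>$ as your final paragraph suggests; the weight costs at most $2$ per simple root (\cref{wt-d}(1)), which is precisely what yields the depth-$4$ hypothesis when $J\subset\BS$ and already suffices for the witness in both cases. Supplying this comparison (or an equivalent argument excluding ${}^J\Adm(\mu)_{\leq 1}$ from the maximum) is what your proposal is missing.
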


\begin{proof}
The proof scheme here is identical to that in the case of Iwahori level structure in \cite{HY21}[\S 4.2-4.3]. We only need to explain the depth hypothesis requirement and justify the index set of the last summand. Note that the proof in \cite{HY21}[Proposition 4.4] was carried out under the assumption $\text{depth}(\mu)>>0$, which was later relaxed in \cite{Sad23}[Proposition 7.1] to $\mu$ being just regular; this latter improvement in part relied on utilizing the explicit construction of certain elements in $\text{Adm}(\mu)\cap Wt^{\mu}W$ coming from \cite{HY21}[\S 5.6]. In the absence of this latter sort of explicit construction in general\footnote{However, this is available when $J\subset \BS$ and $W$ is irreducible of type $A_n, B_n/C_n$, by \cref{sec-abc}.}, we may consider the element $\wtd{w}:=x't^{\mu-\text{wt}(x',x'w_0)}w_0x'^{-1}$ for some $x'\in ~^JW$; note that $\mu-\text{wt}(x',x'w_0)$ is $2$-regular by \cref{wt-d}[(1)] due to our imposed depth hypothesis. Then $\wtd{w}$ is in $^J\Adm(\mu)$ by \cite{Sch24}[Proposition 4.12] and \cref{left-part}, and we can use this element to replace the arguments in the last paragraph of \cite{Sad23}[Proposition 7.1].

Now let us discuss the index set. If $J\subset \BS$, \cref{left-part} does the job. Assume now that $s_0\in J$ and $\<\a_i,\mu\>\geq 2\ell(w_0)+2$ for all $\a_i\in \Delta$. Pick $x'\in ~^JW$ and consider $w':=x't^{\mu-\text{wt}(x',x'w_0)}w_0x'^{-1} \in ~^J\Adm(\mu)$. We argue
    
(a) if $w \in ~^J \text{Adm}(\mu)_{\leq 1}$, then $\ell(w)<\ell(w')$.

Write $w=xt^{\lambda}y$ as before, then $\lambda\leq \mu$; assume that $\<\alpha_j,\lambda\>\leq 1$ for some $\alpha_j \in \Delta$. Note that $\ell(w)\leq \<2\rho,\lambda\>+\ell(w_0)$ and $\ell(w')=\ell(x')+\<2\rho,\mu-\text{wt}(x',x'w_0)\> -\ell(w_0x'^{-1}) = \<2\rho,\mu\> -d(x',x'w_0)$ by \cref{wt-d}[(2)], thus $\ell(w')\geq \<2\rho,\mu\>-\ell(w_0)$. Then $$\ell(w')-\ell(w) \geq \<2\rho,\mu-\lambda\>-2\ell(w_0)\geq \<2\varpi_i,\mu-\lambda\>-2\ell(w_0)\geq \<\a_i,\mu-\lambda\>-2\ell(w_0)>0.$$
 
Hence, (a) is proved. This gives $d_w(b)<d_{w'}(b)$ - thereby implying $d_{^J\text{Adm}(\mu)}(b)=\max\limits_{w\in ^J\text{Adm}(\mu)_{>1}} d_w(b)$, and finishing off the argument by appealing to \cref{left-part}.

\end{proof}

\subsection{Minimizing distance on the quantum Bruhat graph}\label{easy-ineq}
We now state the key technical result on quantum Bruhat graph that will be used in \cref{sec-proof}. 
\begin{theorem}\label{min}
    Let $J\subset \wtd{\BS}$ be a spherical type. Then $\min\limits_{x\in ^J W}d(x,xw_0)=\ell_R(w_0)+\ell(w_J)-\ell_R(w_J)$.
\end{theorem}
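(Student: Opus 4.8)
The plan is to prove the two inequalities separately, using the structure theory of the semi-affine quotient $^JW$ and the formula for distances in the quantum Bruhat graph from \cref{d-subtract}. First I would establish the upper bound $\min_{x\in {}^JW} d(x,xw_0) \le \ell_R(w_0)+\ell(w_J)-\ell_R(w_J)$ by exhibiting an explicit $x\in {}^JW$ (or more precisely, using the decomposition result advertised in the introduction, \cref{sec:alter}): one finds a subset $I\subset\BS$ and an element $z\in ({}^JW)^{-1}$ such that $w_0 = z w_J z^{-1}\cdot w_I$ (with appropriate linearization of $w_J$) in an $\ell_R$-additive fashion, i.e. $\ell_R(w_0)=\ell_R(w_J)+\ell_R(w_I)$, and such that the lengths also add up correctly. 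For $x=z^{-1}$ (which lies in $^JW$) one then computes $xw_0 = w_J z^{-1} w_I$, and using \cref{d-subtract} — or rather its generalization allowing one to break a path into a downward-length-subtracting segment — one gets $d(x,xw_0)\le \ell_R(w_0) + (\ell(w_J)-\ell_R(w_J))$, where the correction term $\ell(w_J)-\ell_R(w_J)$ measures the ``extra'' length that the long element $w_J$ contributes beyond its reflection length when it is not already reduced against $x$.

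For the lower bound, I would argue that for \emph{any} $x\in {}^JW$, the distance $d(x,xw_0)$ is bounded below by $\ell_R(w_0)+\ell(w_J)-\ell_R(w_J)$. The key point is that $x\in {}^JW$ forces $x^{-1}\overline{\a_s}\in\Phi^+$ for all $s\in J$, which (for $J\subset\BS$) means $x$ is the minimal-length representative of the coset $W_Jx$, hence $\ell(w_Jx)=\ell(w_J)+\ell(x)$; dually $w_0 = (w_0 w_J)(w_J)$ with, after conjugating, a guaranteed ``obstruction'' living in $W_J$ that cannot be resolved by a length-decreasing (downward-edge-efficient) move from $x$. Concretely, I would use \cref{wt-d}(2): $d(x,xw_0) = \ell(xw_0)-\ell(x) + \<2\rho,\wt(x,xw_0)\>$, and then bound $\wt(x,xw_0)$ from below. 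Since the minimizing path realizing $\wt$ has weight at least that of any path, and since $\wt(x,xw_0)=\wt(1, x^{-1}\lhd xw_0)$ by \cite{Sad23}[Corollary 3.3], one is reduced to analyzing the Demazure product $x^{-1}\lhd xw_0$; the condition $x\in {}^JW$ guarantees this Demazure product ``sees'' all of $w_J$, contributing at least $\ell(w_J)$ to the length term while the reflection-length savings are capped, giving the stated bound. Combining with the matching upper bound finishes the proof.

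The main obstacle, I expect, is the lower bound — specifically, showing uniformly over all $x\in{}^JW$ (including the case $s_0\in J$, where $^JW$ is the ``semi-affine quotient'' of \cite{Sch24} and need not consist of minimal coset representatives in the usual sense) that no clever choice of $x$ can do better than the $\ell(w_J)-\ell_R(w_J)$ penalty. The naive approach via minimal coset representatives works cleanly when $J\subset\BS$, but when $J$ contains an affine simple reflection the relation between $^JW$ and $w_J$ is subtler, and — as the authors note — the heuristic that the minimum is attained at some $x$ with $x\le xw_0$ fails. I would handle this by invoking the decomposition of $w_0$ from \cref{sec:alter} (relating $w_J$ and $w_0$ via a $({}^JW)^{-1}$-conjugate in an $\ell_R$-additive way, built from \cite{Sad23} and the involution classification in \cite{zib23}) to pin down both the extremal $x$ and the exact value of the penalty term simultaneously, which is presumably why that proposition is proved first. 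The remaining work is then bookkeeping with lengths and reflection lengths via \cref{wt-d} and \cref{d-subtract}.
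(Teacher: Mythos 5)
Your overall architecture (a uniform lower bound over all of $^JW$, plus an extremal $x$ produced from the $\ell_R$-additive decomposition of $w_0$ in \cref{exceptional}) matches the paper, but your lower bound contains a genuine gap. The assertion that for \emph{every} $x\in{}^JW$ the Demazure product $x^{-1}\lhd xw_0$ ``sees all of $w_J$'', so that $\wt(x,xw_0)$ is forced to be large, is never substantiated, and it is unclear how to make it precise: when $s_0\in J$ the element $w_J$ is the long element of the affine parabolic $\wtd{W}_J$, the quantity $\ell(w_J)$ in the statement is its \emph{affine} length (in general different from $\ell(\overline{w_J})$), and $x\in{}^JW$ only records the semi-affine condition $x^{-1}\overline{\a_s}\in\Phi^+$; nothing in your sketch turns this into a quantitative bound on $\ell(xw_0)-\ell(x)+\<2\rho,\wt(x,xw_0)\>$. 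Your fallback of ``invoking the decomposition of $w_0$ from \cref{sec:alter} to pin down the extremal $x$ and the penalty'' cannot supply the lower bound either: a statement about one particular $x$ says nothing about $d(x,xw_0)$ at the other elements of $^JW$, and in the paper's logical order \cref{exceptional} is itself proved \emph{using} the lower bound \eqref{lower-bd}, so that route would be circular.

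The missing ingredient is \cref{key-lemma}: for every $x\in{}^JW$ and $y\in\wtd{W}_J$ one has the exact identity $d(x,xw_0)=\ell(y)+d(x,\overline{y}xw_0)$, proved by induction on $\ell(y)$ using the realization of $^JW$ as left parts of $^J\Adm(\mu)$ for deep $\mu$ (\cref{left-part}) together with the edge criterion of \cite{len15}. Taking $y=w_J$, the lower bound reduces to $d(x,\overline{w_J}xw_0)\geq \ell_R(x^{-1}\overline{w_J}xw_0)\geq \ell_R(w_0)-\ell_R(x^{-1}\overline{w_J}x)=\ell_R(w_0)-\ell_R(w_J)$, i.e.\ to subadditivity and conjugation-invariance of reflection length; no analysis of $\wt$ or of Demazure products is needed. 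The same identity (an equality, not merely an estimate) is also what makes the upper bound work at the affine level: for the $x$ of \cref{exceptional} one has $\overline{w_J}xw_0=xw_I$ and $d(x,xw_I)=\ell_R(w_I)$ (via $x\Delta_I\subset\Phi^-$ and \cref{d-subtract}), giving $d(x,xw_0)=\ell(w_J)+\ell_R(w_0)-\ell_R(w_J)$ on the nose. By contrast, your path-splitting argument would require $\ell(\overline{w_J}x)=\ell(\overline{w_J})+\ell(x)$ and a length-subtractive relation at $\overline{w_J}x$, neither of which is guaranteed once $J$ contains the affine simple reflection; this is exactly the subtlety the key lemma is designed to absorb.
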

Here $\ell_R(w_0)$ (resp. $\ell_R(w_J)$) is the reflection length of the long element $w_0$ in $W$ (resp. $w_J$ in the parabolic subgroup $\wtd{W}_J$ of $\wtd{W}$), i.e. the smallest number $l$ such that $w_0\in W$ (resp. $w_J\in \wtd{W}_J$) can be written as a product of $l$ reflections in $W$ (resp. in $\wtd{W}_J$), see \cite{HY21}[\S 5.1] for details. We note that $\ell_R(\overline{w_J})=\ell_R(w_J)$.
\begin{remark}
    When $J=\emptyset$, i.e. in the case of the Iwahori level, the conclusion of \cref{min} is known from \cite{HY21}[Theorem 5.1]. However, the proof in loc. sit. could be considerably simplified. Indeed, $d(x,xw_0)\geq \ell_R(w_0)$ for any $x$, and if $x=w_0$ then $d(x,xw_0)=d(w_0,1)=\ell_R(w_0)$ by \cite{Sad23}[\S 5], thereby concluding $\min\limits_{x\in W} d(x,xw_0)=\ell_R(w_0)$.
\end{remark}

In this subsection we establish the lower bound. This relies on the following lemma.

\begin{lemma}\label{key-lemma}
    $d(x,xw_0)=\ell(y)+d(x, \overline{y}xw_0)$ whenever $x\in ~^JW$ and $y\in \wtd{W}_J$
\end{lemma}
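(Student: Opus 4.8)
The plan is to prove the identity by induction on $\ell(y)$, peeling off one simple reflection at a time from the left of $y$ while staying inside $\wtd{W}_J$. When $\ell(y) = 0$ there is nothing to prove, so suppose $\ell(y) \ge 1$ and write $y = s y'$ with $s \in J$, $\ell(y') = \ell(y) - 1$. Since $x \in {}^JW$ and $y' \in \wtd{W}_J$, I would like to apply the induction hypothesis to the pair $(x, y')$, which gives $d(x, xw_0) = \ell(y') + d(x, \overline{y'}xw_0)$; the remaining task is then to show $d(x, \overline{y'}xw_0) = 1 + d(x, \overline{s}\,\overline{y'}xw_0) = 1 + d(x, \overline{y}xw_0)$. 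So the whole lemma reduces to the single-reflection case: for $\overline{s}$ the reflection attached to the affine simple root $\a_s$ with $s \in J$ and any $w \in W$ of the form $w = \overline{y'}xw_0$ obtained in this way, one has $d(x, \overline{s}w) = d(x, w) + 1$.

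To handle the single-reflection step I would use the key structural fact that $x \in {}^JW$ means $x^{-1}\overline{\a_s} \in \Phi^+$ for every $s \in J$ — equivalently, $x$ lies on the ``positive'' side of the wall of $\overline{s}$ — together with the definition of the edges of $\G_\Phi$. The point is that multiplying on the left by $\overline{s}$ moves a vertex strictly farther from $x$ in the quantum Bruhat graph, by exactly one step: one should check that $\overline{s}$, viewed as left multiplication, changes $\ell(x^{-1} \cdot)$ (the length distance from $x$) in a controlled way, and that the $\rho$-pairing with $\wt$ behaves additively along the concatenation of a shortest path from $x$ to $\overline{s}w$ with the edge $\overline{s}w \rightharpoonup w$ or $w \rightharpoondown \overline{s}w$. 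Concretely, I expect to show $d(x,\overline s w) \le d(x,w)+1$ by appending an edge, and $d(x,w) \le d(x,\overline s w) + 1$ similarly, but then upgrade the second to the claimed equality using that a shortest path from $x$ cannot ``cross the wall of $\overline s$'' more efficiently — this is where the hypothesis $x^{-1}\overline{\a_s}\in\Phi^+$ is essential, ruling out the parity-cancelling shortcut. Lemma \ref{wt-d}(2), which converts length differences into path lengths via $\wt$, together with Lemma \ref{d-subtract}, should give the needed bookkeeping.

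The main obstacle I anticipate is precisely establishing the inequality $d(x, \overline{s}w) \ge d(x,w) + 1$ (rather than just $\ge d(x,w) - 1$): one has to rule out that a clever shortest path from $x$ to $\overline{s}w$ could be rerouted to reach $w$ in the same number of steps or fewer. I would address this by analyzing what an edge of $\G_\Phi$ incident to $w$ does to the sign of $\langle \cdot, \a_s^\vee\rangle$ or, better, by using a known monotonicity/triangle-type property of $d(x,-)$ along cosets $W_{\langle s\rangle}$ — the same mechanism that underlies Lemma \ref{d-subtract} when $\ell(xy) = \ell(x) - \ell(y)$, here applied in the ``minimal coset representative'' regime guaranteed by $x \in {}^JW$. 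If a direct combinatorial argument on edges proves delicate, an alternative is to realize $d(x, \overline{y}xw_0)$ via Lemma \ref{d-subtract} as a reflection length $\ell_R$ computation and track how $\ell_R$ grows under left multiplication by generators of $\wtd{W}_J$ — this ties the statement directly to the $\ell_R(w_J)$ term appearing in Theorem \ref{min}, which is after all the intended application.
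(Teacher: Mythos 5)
Your induction skeleton (peel one $s\in J$ off $y$ and reduce to a single-reflection statement) is the same as the paper's, but the one-step claim is exactly where the content lies, and your treatment of it has two genuine problems. First, a direction error: with your $w=\overline{y'}xw_0$ the identity you must prove is $d(x,w)=d(x,\overline{s}w)+1$, i.e.\ left multiplication by $\overline{s}$ brings the vertex \emph{closer} to $x$; yet you state the goal as $d(x,\overline{s}w)=d(x,w)+1$ and argue that $\overline{s}$ pushes the vertex farther away. Fed into the induction, that version yields $d(x,xw_0)=\ell(y)+d(x,xw_0)+\ell(y)$, a contradiction, so the slip is not merely notational. Second, and more seriously, the only hypothesis you bring to bear on the wall-crossing step is $x^{-1}\overline{\a_s}\in\Phi^+$, i.e.\ the definition of $x\in{}^JW$. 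That suffices only for the very first step ($y'=1$). In the general inductive step the reflection $\overline{s}$ is applied to $\overline{y'}xw_0$ with $\ell(y')\ge 1$, and the sign condition actually needed is on $(\overline{y'}x)^{-1}\overline{\a_s}$ (in tandem with $x^{-1}\overline{\a_s}\in\Phi^+$); this does not follow from $x\in{}^JW$ alone. Producing it is where the paper's proof does its real work: one writes a deep-translation element $xt^{\lambda}z\in{}^J\wtd{W}$ (available by \cref{left-part}), uses the length inequality $sy'\cdot(xt^{\lambda}z)>y'\cdot(xt^{\lambda}z)$ in $\wtd{W}$, and reads off $(\overline{y'}x)^{-1}(\a_i)\in\Phi^+$, resp.\ $(\overline{y'}x)^{-1}(-\theta)\in\Phi^+$ when $s=s_0$, by the argument of \cref{left-part}. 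Your sketch contains no substitute for this mechanism.

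Concerning the one-step identity itself: you propose to establish it by hand from \cref{wt-d}(2), \cref{d-subtract} and a "monotonicity/triangle" heuristic, but a statement of the form $d(x,\overline{s}v)=d(x,v)\pm 1$ with a prescribed sign is simply false for arbitrary $v$; the correct statement requires sign hypotheses on \emph{both} $x^{-1}\overline{\a_s}$ and $v^{-1}\overline{\a_s}$, and must also cover $s=s_0$, where $\overline{s}=s_\theta$ is a non-simple reflection, so the naive "cross the wall of a simple reflection" picture does not apply verbatim. This two-sided lemma is precisely the known result on the quantum Bruhat graph that the paper invokes (\cite{len15}, Lemma 7.7); \cref{d-subtract} plays no role here, since its hypothesis $\ell(xy)=\ell(x)-\ell(y)$ is not available. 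So the proposal, as written, leaves the essential step both mis-stated and unproved.
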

\begin{proof}
We will induct on $\ell(y)$. By \cref{left-part}, we may write $w=xt^{\lambda}z \in ~^J\wtd{W}$ for some $z\in W$ and cocharacter $\lambda$ of depth $>>0$.

Clearly, the claim is trivial for $y=1$. Assume now the claim is true for $y'\in \wtd{W}_J$ and let $y=sy'>y'$, with $s\in J$; then $sy'w>y'w$. Note that $y'w=\overline{y'}xt^{\lambda'}z$, where $\lambda'$ still has depth $>>0$. Arguing as in the proof of \cref{left-part}, we thus get $(\overline{y'}x)^{-1}(\alpha_i)\in \Phi^+$ if $s=s_{\alpha_i}$ is a finite simple reflection, or $(\overline{y'}x)^{-1}(-\theta)\in \Phi^+$ if $s=s_{(-\theta,1)}$ is the affine simple reflection; note that $x^{-1}\alpha_i \in \Phi^+$ and $x^{-1}(-\theta) \in \Phi^+$ accordingly in these cases. Thus by \cite{len15}[Lemma 7.7], $d(x, \overline{y'}xw_0)=1+d(x, \overline{sy'}xw_0)$ in either case, whence $$d(x,xw_0)=\ell(y')+d(x, \overline{y'}xw_0)=\ell(y')+1+d(x, \overline{sy'}xw_0)=\ell(y)+d(x, \overline{y}xw_0).$$

\end{proof}

In particular, \cref{key-lemma} implies $d(x,xw_0)=\ell(w_J)+d(x, \overline{w_J}xw_0)$. Suppose now for contradiction that there exists $x\in~ ^J W$ with $$d(x,xw_0) < \ell_R(w_0)+\ell(w_J)-\ell_R(w_J).$$ 
We then get 
\begin{align*}
    \ell_R(w_0)-\ell_R(w_J) > d(x,\overline{w_J}xw_0) \geq \ell_R(x^{-1}\overline{w_J}xw_0)\\ \geq \ell_R(w_0)-\ell_R(x^{-1}\overline{w_J}x) = \ell_R(w_0)-\ell_R(w_J),
\end{align*}
a contradiction. Hence,
\begin{equation}\label{lower-bd}
    \min\limits_{x\in ^J W}d(x,xw_0)\geq \ell_R(w_0)+\ell(w_J)-\ell_R(w_J).
\end{equation}

\section{Explicit constructions for type $A_n, B_n /C_n$}\label{sec-abc}

In this section, for an irreducible Weyl group $W$ of type $A_n, B_n/C_n$ and $J\subset \BS$, we set out to prove

(a) there exists $x \in ~^JW$ with $x\leq xw_0$ and $\ell(x)=\frac{1}{2}\{(\ell(w_0)-\ell_R(w_0))-(\ell(w_J)-\ell_R(w_J))\}$.

This is analogous to the construction discussed in \cite{HY21}[section 5.6] for the case of $J=\emptyset$. Our argument is based on an downward induction on $|J|$. If $J=\mathbb{S}$, then $x=1$ satisfies the condition in (a). Let $J'\subset \BS$ with $J'=J\setminus \{s_j\}$ for some $1\leq j \leq n$. For induction, note that $$^{J'} W=\coprod\limits_{i \in ^{J'}W_{J}} i \cdot ~^JW.$$ 

By inductive hypothesis, there exists $x\in ~^JW$ as asserted in (a). Now, suppose that there exists $i\in ^{J'}W_{J}$ such that 
\begin{enumerate}[(i)]
    \item $i < iw_{J}$, and
    \item $\ell(i)=\frac{1}{2}\{(\ell(w_J)-\ell_R(w_J))-(\ell(w_{J'})-\ell_R(w_{J'})\}$.
\end{enumerate}

Upon taking minimal length left coset representatives, the inequality $x\leq xw_0$ gives $x \leq ~^J(xw_0)=w_Jxw_0$; since $x, w_Jxw_0 \in ^JW$ and $i,iw_J \in W_J$, we have $\ell(ix)=\ell(i)+\ell(x), \ell(iw_J\cdot w_Jxw_0)=\ell(iw_J)+\ell(w_Jxw_0)$. We thus obtain $ix\leq iw_J\cdot w_Jxw_0=ixw_0$. Furthermore, $\ell(ix)=\ell(i)+\ell(x)=\frac{1}{2}\{(\ell(w_0)-\ell_R(w_0))-(\ell(w_{J'})-\ell_R(w_{J'}))\}$. Therefore, $ix \in ~^{J'}W$ is the desired element for which the lower bound for the function $y \mapsto d(y,yw_0)$ on $^{J'}W$ is achieved, and this completes the induction step.

Now we do a case-by-case analysis to give explicit description of such elements $i \in ~^{J'}W_{J}$. Note that it suffices for this purpose to reduce to the case where $J$ is irreducible. Since we only consider Weyl groups of type $A_n, B_n/C_n$ in this subsection, the relevant subdiagram $J$ is also of the same type. To simplify notation, we write $s_{[a,b]}=s_{a}s_{a+1}\cdots s_b$ for $1\leq a<b \leq n$.

\subsubsection{$J$ is of type $A_n$} We need to consider only the case of $1\leq j \leq \frac{\lceil n \rceil}{2}$, because conjugation by $w_0$ is a symmetry of the Dynkin diagram taking $j$ to $n+1-j$. We have $\ell(w_J)-\ell_R(w_J)=\frac{n(n+1)}{2}-\lceil \frac{n}{2}\rceil$.

If $j=1$, then $J'\simeq A_{n-1}$. In this case, it is already determined in \cite{HY21} that $i=s_{[1,\lfloor \frac{n}{2} \rfloor]}$ fits the bill.

If $2\leq j\leq \frac{\lceil n \rceil}{2}$, then $J'\simeq A_{j-1}\times A_{n-j}$, and thus $$\ell(w_{J'})-\ell_R(w_{J'})=\frac{j(j-1)}{2}+\frac{(n-j)(n-j+1)}{2}-(\lceil\frac{j-1}{2}\rceil+\lceil \frac{n-j}{2} \rceil).$$ Then $i=s_{[j,n]}s_{[j-1,n-1]}\cdots s_{[j+1-\lfloor \frac{j}{2}\rfloor, n+1-\lfloor\frac{j}{2}\rfloor]}\delta$, where 
\begin{equation*}
    \delta=
        \begin{cases}
          &1, \text{ if }j \text{ is even };\\
          & s_{[\lceil\frac{j}{2}\rceil, \lfloor\frac{n}{2}\rfloor]},  \text{ if }j \text{ is odd}.
        \end{cases}
    \end{equation*}
It is an easy calculation to check that $\ell(i)$ is equal to $\frac{1}{2}(nj-j^2+j)$ if $j$ is even, and $\frac{1}{2}(nj-j^2+j-1)$ if $j$ is odd.

\subsubsection{$J$ is of type $B_n$ or $C_n$} We have $\ell(w_J)-\ell_R(w_J)=n^2-n$. If $j=1$, then $J'\simeq B_{n-1}$ (resp. $C_{n-1}$). In this case, it is already determined in \cite{HY21} that $i=s_{[1,n-1]}$ fits the bill.

If $2\leq j\leq n$, then $J'\simeq A_{j-1}\times B_{n-j}$, and thus $$\ell(w_{J'})-\ell_R(w_{J'})=\frac{j(j-1)}{2}+(n-j)^2-(\lceil\frac{j-1}{2}\rceil+n-j).$$ 

Then $i=\prod\limits_{\kappa=0}^{\lceil \frac{j}{2}\rceil-1}s_{[j-\kappa,n]} \prod\limits_{\eta=0}^{\lceil \frac{j}{2} \rceil-1}s_{[\lceil \frac{j}{2} \rceil-\eta, n-\gamma-2\eta]}$, where 
\begin{equation*}
    \gamma=
        \begin{cases}
          &2, \text{ if }j \text{ is even };\\
          &1,  \text{ if }j \text{ is odd}.
        \end{cases}
    \end{equation*}
An easy calculation verifies that $\ell(i)=2jn-\frac{3j^2+j}{2}+\lceil \frac{j-1}{2}\rceil$.

\begin{remark}
The discussion above shows that $$\min\limits_{x\in ~^JW}d(x,xw_0)=\min\limits_{x\in ~^JW, x\leq xw_0}d(x,xw_0)=\ell(w_0)-2\max\limits_{x\in ~^JW, x\leq xw_0}\ell(x).$$
This is in similar spirit to the argument in \cite{HY21}. However, an attempt at the above sort of explicit construction results into cumbersome formula for type $D_n$. Furthermore, this is false when $s_0\in J$; for instance, take $W$ to be of type $B_4$ and $J=\{s_0,s_1,s_3,s_4\}$, then $\{x\in ~^JW: x\leq xw_0\}=\emptyset$.
\end{remark}

\section{Good decomposition of the long element in finite Weyl groups}\label{sec:alter}
In this subsection we analyze certain decomposition of the long element $w_0$ well-suited to our purpose of realizing the minimum asserted in \cref{min}.
\begin{proposition}\label{exceptional}
    Let $J\subset \wtd{\BS}$ be a spherical type. Then there exists $x \in ~^JW $ and $I \subset \BS$ such that
    \begin{enumerate}[(i)]
        \item $w_0= ~(^{x^{-1}}\overline{w}_J) w_I$;
        \item The above decomposition is $\ell_R$-additive, i.e. $\ell_R(w_0)=\ell_R(w_J)+
    \ell_R(w_I)$;
        \item $d(x,xw_I)=\ell_R(w_I)$.
    \end{enumerate}
\end{proposition}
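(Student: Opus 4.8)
The plan is to prove \cref{exceptional} in two stages: first when $J\subset\BS$, where the required decomposition of $w_0$ is essentially already available, and then for an arbitrary spherical $J$ (in particular when $J$ contains the affine simple reflection $s_0$) by reducing to the first stage. Two preliminary reductions are harmless. Since the quantum Bruhat graph of a product of Weyl groups is the product of the graphs, the quantities $d(\cdot,\cdot)$ and $\ell_R$ and all three conditions (i)--(iii) are compatible with direct products, so we may assume $W$ is irreducible. Moreover, by \cref{same-dim} and the $\Omega$-translation discussion following it one may replace $J$ by $\mathrm{Ad}(\tau)(J)$ for a suitable $\tau\in\Omega$; this forces $J\subset\BS$ in several cases (e.g. all of type $A_n$), but not in general, so the case $s_0\in J$ genuinely has to be handled.

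For $J\subset\BS$ we have $\overline w_J=w_J$, and I would take for $x\in{}^JW$ and $I\subset\BS$ the explicit, type-by-type $\ell_R$-additive decompositions $w_0=(x^{-1}w_Jx)\,w_I$ recorded in \cite{Sad23}[\S 5] (these are exactly the decompositions computing the weight datum $\text{wt}(w_0,1)$). This gives (i) and (ii) at once. For (iii), by \cref{d-subtract} it suffices to verify that $\ell(xw_I)=\ell(x)-\ell(w_I)$, equivalently that $xw_I\in W^I$; in each case this is immediate from the shape of the chosen coset representative $x$, whose reduced expression ends in a reduced expression for $w_I$.

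For $s_0\in J$, the group $\wtd{W}_J$ is a finite reflection subgroup of the affine Weyl group and $\overline w_J\in W$ is an involution with $\ell_R(\overline w_J)=\ell_R(w_J)$. The new ingredient is the classification in \cite{zib23} of involutions of finite Weyl groups in terms of longest elements of parabolic subgroups: it yields $J'\subset\BS$ with $\ell_R(w_{J'})=\ell_R(w_J)$ and an element $z\in W$ with $z^{-1}\overline w_Jz=w_{J'}$, together with enough control on $z$ modulo the centralizer $Z_W(\overline w_J)$. Applying the previous case to $w_{J'}$ gives $y\in{}^{J'}W$ and $I\subset\BS$ with $w_0=(y^{-1}w_{J'}y)\,w_I$ $\ell_R$-additively and $\ell(yw_I)=\ell(y)-\ell(w_I)$, so that $w_0=(zy)^{-1}\overline w_J(zy)\,w_I$. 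Writing $zy=w_1x$ with $w_1\in W_J$ and $x\in{}^JW$ (\cite{Sch24}[Lemma 3.7]), one then arranges, using the freedom in the choice of $z$, that $w_1$ centralizes $\overline w_J$, so that $x^{-1}\overline w_Jx=w_0w_I$ and (i), (ii) hold, and that $x$ still satisfies $\ell(xw_I)=\ell(x)-\ell(w_I)$, giving (iii) by \cref{d-subtract}. For the finitely many affine types occurring here (the exceptional ones together with $B_n$, $C_n$, $D_n$) this last matching can, if necessary, be checked directly against the tables of \cite{Sad23} and \cite{zib23}.

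The step I expect to be the main obstacle is exactly this last matching when $s_0\in J$: a bare conjugacy statement $\overline w_J\sim w_{J'}$ does not suffice. One needs the conjugator, after stripping off its $W_J$-part, to lie in ${}^JW$ \emph{and} to centralize $\overline w_J$, \emph{and} to remain compatible with the $\ell_R$-additive factorization of $w_0$ and with the right-divisibility $\ell(xw_I)=\ell(x)-\ell(w_I)$ feeding \cref{d-subtract}. Reconciling all of these requirements simultaneously is what forces the appeal to the refined structure in \cite{zib23} (rather than just to conjugacy classes of involutions), in tandem with the word-level decompositions of \cite{Sad23}.
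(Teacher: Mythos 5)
Your proposal has the right outer shape (reduce the general spherical $J$ to some $J'\subset\BS$ via conjugacy of involutions from \cite{zib23}, strip the conjugator to a coset representative via \cite{Sch24}, and get (iii) from \cref{d-subtract}), but it defers to the literature or to ``arranging'' at exactly the two points where the actual content of \cref{exceptional} lies. First, for $J\subset\BS$ you assert that the $\ell_R$-additive decompositions $w_0=(x^{-1}w_Jx)\,w_I$ are already recorded in \cite{Sad23}[\S 5]. They are not: that reference only supplies $\ell_R$-reduced expressions of $w_0$ itself (the input for computing $\wt(w_0,1)$ and $d(w_0,1)=\ell_R(w_0)$). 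The assignment $J\mapsto I$, the verification that $\overline{w_J}$ is conjugate to $w_0w_I$, and the check that $\ell_R(w_0w_I)=\ell_R(w_0)-\ell_R(w_I)$ (i.e.\ that $w_I$ is an $\ell_R$-subword of $w_0$) constitute the type-by-type analysis of \cref{type-D}--\cref{type-G2} in the paper, obtained by combining \cite{Sad23} with \cite{zib23}; your proposal supplies none of it. Relatedly, your route to (iii) rests on an unverified claim about unspecified explicit elements (``whose reduced expression ends in a reduced expression for $w_I$''). The paper avoids any such explicit control: it takes $x$ of \emph{maximal} length in the set $\Xi_{J,I}$ of elements of ${}^JW$ satisfying (i)--(ii), shows $x\Delta_I\subset\Phi^-$ (which only needs ${}^{s_i}(w_0w_I)=w_0w_I$ for $\alpha_i\in\Delta_I$, checked via near-centrality of long elements), deduces $\ell(xw_I)=\ell(x)-\ell(w_I)$ from the inversion-set length formula, and then invokes \cref{d-subtract}.

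Second, the step you yourself flag as the main obstacle --- forcing the $W_J$-part $w_1$ of the conjugator to centralize $\overline{w_J}$ while keeping $x\in{}^JW$ --- is left to ``the freedom in the choice of $z$'' and a possible table check, with no argument; the refined information in \cite{zib23} concerns conjugacy inside $C_W(w_0)$ and does not by itself give such control, especially when $s_0\in J$ and ${}^JW$ is the semi-affine quotient rather than a set of minimal coset representatives (your appeal to $\Omega$-conjugation also does not eliminate this case and would itself need a compatibility check for the statement of the proposition). The paper's resolution is different in kind and is the key new idea: nothing needs to be arranged, because for \emph{any} $z$ with ${}^z(w_0w_I)=\overline{w_J}$ the $W_J$-part automatically centralizes $\overline{w_J}$. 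This is proved indirectly: if ${}^{\zeta^{-1}}w_J\neq w_J$ it is strictly shorter than $w_J$, and then \cref{key-lemma} applied to $x={}^Jz$ yields $d(x,xw_0)<\ell(w_J)+\ell_R(w_0)-\ell_R(w_J)$, contradicting the quantum Bruhat graph lower bound \eqref{lower-bd}. Without this mechanism (or a genuine substitute), your stage-2 argument does not close, and your stage 1 is also incomplete because the advertised source for the decompositions does not contain them.
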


Before proceeding with the proof of this proposition, we first establish \cref{min}.
\begin{proof}[Proof of \cref{min}]
For Weyl groups of type $A_n, B_n/C_n$, we may use the construction scheme in \cref{sec-abc} if $J\subset \mathbb{S}$ : note that $$d(x,xw_0)=\ell(xw_0)-\ell(x)=\ell_R(w_0)+\ell(w_J)-\ell_R(w_J)$$ for such $x$, thereby proving \cref{min} in these cases.

Let $J\subset \wtd{\mathbb{S}}$ be arbitrary. For the element $x\in ~^JW$ described in \cref{exceptional}, we compute $$d(x,\overline{w_J}xw_0)=d(x,x ^{x^{-1}}\overline{w_J} w_0)=d(x,xw_I)=\ell_R(w_I)=\ell_R(w_0)-\ell_R(w_J).$$ Therefore, by \cref{key-lemma} we have $d(x,xw_0)=\ell(w_J)+\ell_R(w_0)-\ell_R(w_J)$. We are done.
\end{proof}

\begin{proof}[Proof of \cref{exceptional}]
    It will be shown in \cref{type-D}-\cref{type-G2} that there exists $z \in W$ and 
$I \subset \BS$ such that $^z(w_0w_I)=\overline{w_J}$ and $\ell_R(w_J)=\ell_R(w_0w_I)=\ell_R(w_0)-\ell_R(w_I)$. Let us now establish the remaining properties in the desiderata.

\smallskip
We first show that

(a) $^x(w_0w_I)=\overline{w_J}$ for $x=^J z$. 

Choose $\z \in \wtd{W}_J$ such that $\overline{\z}=z_J \in \overline{\wtd{W}_J}=W_J$. It suffices to show that $^{\z^{-1}}w_J=w_J$ for such $\z\in \wtd{W}_J$, and hence $\overline{^{\z^{-1}}w_J}=\overline{w}_J$, i.e. $^{z_J^{-1}}\overline{w_J}=\overline{w_J}$. Suppose otherwise; then $\ell(^{\z^{-1}}w_J)< \ell(w_J)$, since $^{\z^{-1}}w_J \in \wtd{W}_J$. By $\cref{key-lemma}$, 
\begin{align*}
    d(x,xw_0)&=\ell(^{\z^{-1}}w_J)+d(x,(^{z_J^{-1}}\overline{w_J})xw_0)=\ell(^{\z^{-1}}w_J)+d(x,x (^{z^{-1}}\overline{w_J})w_0)\\&=\ell(^{\z^{-1}}w_J)+d(x,xw_I)=\ell(^{\z^{-1}}w_J)+\ell_R(w_I)\\&=\ell(^{\z^{-1}}w_J)+\ell_R(w_0)-\ell_R(w_J)<\ell(w_J)+\ell_R(w_0)-\ell_R(w_J).
\end{align*}

By \cref{lower-bd}, this is a contradiction and hence $^{z_J^{-1}}\overline{w_J}=\overline{w_J}$. 

Hence, we have shown that the set $$\Xi_{J,I}=\{x\in ~^JW: \text{ (a), (b) of the desiderata is fulfilled for the pair } (x,I)\}$$ is nonempty.

\smallskip
Next, we claim that

(b) if $x$ is an element of maximal length in $\Xi_{J,I}$, then $x\Delta_I \subset \Phi^-$. 

Suppose otherwise that $x\alpha_i > 0$ for some $\alpha_i \in \Delta_I$; then $xs_i > x$. We show that $(xs_i,I) \in \Xi_J$, contradicting the choice of $x$. It suffices to check that $^{s_i}(w_0w_I)=w_0w_I$. Note that the long element is central, unless it corresponds to a Dynkin diagram of type $A_m (m\geq 2), D_{2m+1}$ and $E_6$. This consideration alone takes care of most of the cases arising in the following calculations, and we may easily verify this condition in the remaining cases with the description of $I$ at hand.

\smallskip

Finally, we note that 

(c) $d(x,xw_I)=\ell_R(w_I)$ for the element found in (b).

For $x \in W$, define $\text{Inv}(x)=\{\alpha\in \Phi^+: x\alpha \in \Phi^-\}$; then $\text{Inv}(w_I)=\Phi_I^+$. 
The well-known length formula of product of elements in the Weyl group
$$\ell(xy)=\ell(x)-\ell(y)+2|\text{Inv}(x)^c\cap \text{Inv}(y^{-1})|$$
gives $\ell(xw_I)=\ell(x)-\ell(w_I)$ by (b), and then we deduce (c) from \cref{d-subtract}.

\end{proof}

Let us write $a\sim b$ to denote that $a,b\in W$ are in the same conjugacy class. Below we explicitly determine the assignment $J\mapsto I$ via a case-by-case analysis. For this purpose, we may reduce to the case $J\subset \BS$; indeed, since $\overline{w_J}$ is an involution in $W$, there exists $J'\subset \BS$ so that $\overline{w_J}\sim w_{J'}$, then we may apply the assignment $J'\mapsto I$ to derive the conclusion of \cref{exceptional} in this case. Note that the element $w_0w_I$ as in the proposition must be an involution and we utilize below the classification of conjugacy classes of such involutions as carried out in \cite{zib23}, often without further reference; the author in loc. sit. considers conjugacy in the centralizer subgroup $C_W(w_0)$, whereas our situation allows for more flexibility and we exploit that to ensure the additional properties claimed in the proposition. 

We explain in detail the case of type $D_n$, which is the most complicated, and give only brief sketch of arguments for types $A_n, B_n/C_n$. For the exceptional types, the tables contain the information of $(J,\ell_R(w_J))$ in the first row and $(I,\ell_R(I))$ in the second row and the roles of $(J,I)$ are symmetric. For brevity, we skip the cases with factors of type $A_k$ with $k\geq 2$; using the fact $w_{A_k}\sim \prod\limits_{\lceil \frac{k}{2}\rceil \text{ copies}}w_{A_1}$, we record the latter cases instead. Also, in type $E_n$ we use $w_{[1,6]}=w_{E_6}\sim w_{D_4}$. Finally, We have $\ell_R(w_0w_I)=\ell_R(w_0)-\ell_R(w_I)$ by a direct check of explicit $\ell_R$-reduced expression of $w_0$.

\subsubsection{Type $D_n$}\label{type-D}

Here $J \simeq A_{n_1}\times \cdots \times A_{n_r}\times D_l$ where $n_1,\cdots, n_r >0$, and either $l=0$ or $l \geq 2$. Our convention here is to consider the largest connected component of $J$ containing $\{n-1,n\}$ to be of type $D_l$, if it exists. We write $J=[i_1,i_1+n_1] \cup \cdots \cup [i_r,i_k+n_r] \cup [n-l+1, n]$, where the last interval is present only when $\{n-1,n\}\subset J$.

\smallskip 
We first show that

(a) $ w_J\sim (\prod\limits_{k \text{ copies}} w_{A_1}) \cdot w_{D_l}, \text{ with } k=\lceil \frac{n_1}{2} \rceil + \cdots + \lceil \frac{n_r}{2} \rceil.$

Now, for a Weyl group with root system of type $A_m$, its longest element $w_{A_m}$ is conjugate to $w_{0,J'}$ where $J'=[1,m]\cap \mathbb{Z}_{\text{odd}}$. Thus, for each $1\leq j \leq r$ we can find $x_j' \in W(A_{n_j})$ such that $^{x_j'} w_{0,A_{n_j}}=w_{0,J_j'}$, where $J_j'=\{i_j, i_j+2,\cdots, i_j+2(\lceil \frac{n_j}{2} \rceil -1)\}$ for $1\leq j\leq r$. Note that $\text{supp}(x_i')$ and $\text{supp}(w_{A_{n_j}})$ is orthogonal whenever $i\neq j$; thus setting $x'=x_1'\cdots x_k'$, we see that $$^{x'} w_J=(\prod\limits_{j=1}^r \prod\limits_{\kappa_j=0}^{\lceil \frac{n_j}{2} \rceil -1} s_{i_j+2\kappa_j}) \cdot w_{D_l}.$$

Next, we argue that 

(b) the first factor in the RHS above is conjugate to $s_1s_3\cdots s_{2k-1}$ by an element whose support is orthogonal to $[n-l+1,n]$. 

We work with one connected component of $J$ at a time. Suppose that $i_1>1$. Since $^{s_{\alpha_{a-1}+\alpha_{a}}}s_{a}=s_{a-1}, ^{s_{\alpha_{a-1}+\alpha_{a}}}s_b=s_b$ whenever $b\geq a+2$ or $b \leq a-3$, we can apply repeated conjugation to get $$\prod\limits_{\kappa_1=0}^{\lceil \frac{n_1}{2} \rceil -1} s_{i_1+2\kappa_1} \sim s_1 \prod\limits_{\kappa_1=1}^{\lceil \frac{n_1}{2} \rceil -1} s_{i_1+2\kappa_1} \sim s_1s_3\prod\limits_{\kappa_1=2}^{\lceil \frac{n_1}{2} \rceil -1} s_{i_1+2\kappa_1}\sim \cdots \sim s_1s_3\cdots s_{\lceil \frac{n_1}{2}\rceil}.$$

Hence, passing on to a further conjugate, we have that $$^{x''} w_J=s_1s_3\cdots s_{\lceil \frac{n_1}{2}\rceil}(\prod\limits_{j=2}^r \prod\limits_{\kappa_j=0}^{\lceil \frac{n_j}{2} \rceil -1} s_{i_j+2\kappa_j}) \cdot w_{D_l}.$$ Let $i_p$ be the minimum positive integer such that $i_p-\{i_{p-1}+2(\lceil \frac{n_{p-1}}{2} \rceil -1)\}\geq 3$. In this case, we already have $\prod\limits_{j=1}^{p-1} \prod\limits_{\kappa_j=0}^{\lceil \frac{n_j}{2} \rceil -1} s_{i_j+2\kappa_j}=s_1s_3\cdots s_{\lceil \frac{n_{p-1}}{2} \rceil}$. It is now clear that we can run a similar sequence of conjugation to move the block starting with $s_{i_p}$ too, finishing our argument.

Denote by $w_{k,l}:=w_I$ the longest element for the sub-diagram $I=\{1,3,\cdots, 2k-1, n-l+1, n-l+2,\cdots n\}$\footnote{In \cite{zib23}, this element is denoted by $c_{k,l-1}$.}. So far, we have established $w_J \sim w_{k,l}$.

Note that $2k-1\leq n-2$, unless $n$ is even and $J\sim A_{n-1}$, i.e. $J=\BS \setminus \{n-1\}$ or $J=\BS\setminus \{n\}$. Let us exclude these cases for now and consider the element $w_I$ defined as
\begin{equation*}
w_I=
\begin{cases}
&w_{k,0}, \text{ if }n=2k+l, \text{ or } n-(2k+l)=1\text{ and }l \text{ is even };\\
& w_{k,n-2k-(l-1)},  \text{ if } n-(2k+l)\geq 1 \text{ and } l \text{ is odd};\\
& w_{k,n-2k-l}, \text{ otherwise}.
\end{cases}
\end{equation*}

We have $\ell_R(w_0w_I)=\ell_R(w_0)-\ell_R(w_I)$, by the explicit decomposition of $w_0$ in \cite{Sad23}[\S 5].
\begin{itemize}
    \item In the first case, $w_I$ is a $\ell_R$-subword of $w_0$ since all simple reflections $s_i$ such that $i\leq n-2$ and $i$ is odd occurs in a $\ell_R$-reduced expression of $w_0$.
    \item In the remaining cases, the second index in the subscript of the definition of $w_I$ has same parity as that of $n$, and thus comparing the explicit expression of long element of Weyl groups of type $D_l$ (supported on $\{n-l+1, n-l+2,\cdots n\}$) and $D_n$, we see that the former is a $\ell_R$-subword of the latter.
\end{itemize}
Furthermore, $w_{I}w_0=w_0w_{I}$ and hence $w_{I}w_0$ is an involution. It follows directly from \cite{zib23} that $w_0w_I \sim w_J$ in the first two cases; for the third case, we note $w_J=w_{k,l}\sim w_{k,l+1}\sim w_0w_{k,n-2k-l}=w_0w_I$.

Now we deal with the cases where $n$ is even and $J\sim A_{n-1}$. Here
\begin{itemize}
    \item $w_0w_{\BS \setminus \{n-1\}}\sim w_{\{1,3,\cdots,n-3, n\}}, w_0w_{\BS \setminus \{n\}}\sim w_{\{1,3,\cdots,n-3,n-1\}}$, if $n=0$ mod $4$;
    \item  $w_0w_{\BS \setminus \{n-1\}}\sim w_{\{1,3,\cdots,n-3,n-1\}}, w_0w_{\BS \setminus \{n\}}\sim w_{\{1,3,\cdots,n-3, n\}}$, if $n=2$ mod $4$. 
\end{itemize}
\subsubsection{Type $A_n$} Here $J\simeq A_{n_1} \times \cdots \times A_{n_r}$. Then $w_J\sim (\prod\limits_{p \text{ copies }}w_{A_1})$ with $p=\sum\limits_{i=1}^{r}\lceil \frac{n_i}{2} \rceil$, which is further conjugate to either $w_{(\lceil\frac{n}{2}\rceil-p, \lceil\frac{n}{2}\rceil+p)}$ or $w_{(\lceil\frac{n}{2}\rceil-p, \lceil\frac{n}{2}\rceil+p]}$, depending on whether $n$ is odd or even. Hence, setting $I=[p+1,n-p]$ we obtain $w_J\sim w_0w_I$.
\subsubsection{Type $B_n/C_n$} Our argument here for type $C_n$ directly adapts to type $B_n$. Here $J\simeq A_{n_1}\times \cdots \times A_{n_r}\times C_l$, with either $l=0$ or  $l\geq 2$. Then $w_J \sim (\prod\limits_{p \text{ copies }}w_{A_1}) w_{B_l}$, with $p=\sum\limits_{i=1}^{r}\lceil \frac{n_i}{2} \rceil$. Hence, setting $I=\{1,3,\cdots, 2p-1\}\coprod \{2p+l+1,\cdots,n\}$ we obtain $w_J\sim w_0w_I$.

\subsubsection{Type $E_6$}\label{type-E6}

\begin{center}\label{ref length}
\scalebox{0.9}{
\begin{tabular}{ |c|c|c|c|c|} 
 \hline
 $(\emptyset, 0)$ & $(A_1, 1)$ & $(A_1^2, 2)$ & $(A_1^3, 3)$ & $(D_4 \text{ or } D_5, 4)$   \\ 
 \hline
 $(\Delta, 4)$ & $(\Delta\setminus \{2\}\sim A_5,3)$ & $(\{3,4,5\}\sim A_3,2)$ & $(\{4\}\sim A_1,1)$ & $(\emptyset ,0)$   \\
 \hline
\end{tabular}}
\end{center}

\subsubsection{Type $E_7$}\label{type-E7} 
\begin{center}
\scalebox{0.9}{

\begin{tabular}{ |c|c|c|c|c|} 
 \hline
 $(\emptyset, 0)$ & $(A_1, 1)$ & $(A_1^2, 2)$ & $(A_1^3 \text{ except } \{2,5,7\}, 3)$ & $(\{2,5,7\}, 3)$   \\ 
 \hline
 $(\Delta, 7)$ & $(\Delta\setminus \{1\}\sim D_6,6)$ & $(\Delta\setminus \{1,6\}\sim D_4\times A_1,5)$ & $(\{1,2,5,7\}\sim A_1^4,4)$ & $(\{2,3,4,5\}\sim D_4,4)$  \\
 \hline
\end{tabular}}
\end{center}
\subsubsection{Type $E_8$}\label{type-E8}
\begin{center}\label{ref length}
\scalebox{0.9}{

\begin{tabular}{ |c|c|c|c|c|c|} 
 \hline
 $(\emptyset, 0)$ & $(A_1, 1)$ & $(A_1^2, 2)$ & $(A_1^3, 3)$ & $(A_1^4, 4)$ & $(D_4,4)$  \\ 
 \hline
 $(\Delta, 8)$ & $(\Delta\setminus \{8\}\sim E_7,7)$ & $(\Delta\setminus \{1,8\}\sim D_6,6)$ & $(\{1,2,5,8\}\sim A_1^4,4)$ & $(\{1,4,6,8\}\sim A_1^4,4)$ & $(D_4,4)$  \\
 \hline
\end{tabular}}
\end{center}
\subsubsection{Type $F_4$}\label{type-F4}
\begin{center}
\scalebox{0.9}{

\begin{tabular}{ |c|c|c|c|c|} 
 \hline
 $(\emptyset, 0)$ & $(\{1\} \text{ or } \{2\}, 1)$ & $(\{3\} \text{ or } \{4\}, 1)$ & $(A_1^2, 2)$ & $(\{2,3\}\sim C_2, 2)$   \\ 
 \hline
 $(\Delta, 4)$ & $(\Delta\setminus \{1\}\sim C_3,3)$ & $(\Delta\setminus \{4\}\sim C_3,3)$ & $(A_1^2,2)$ & $(\{2,3\}\sim C_2, 2)$  \\
 \hline
\end{tabular}}
\end{center}

\subsubsection{Type $G_2$}\label{type-G2}

\begin{center}
\scalebox{0.9}{

\begin{tabular}{ |c|c|c|} 
 \hline
 $(\emptyset, 0)$ & $(\{1\}, 1)$ & $(\{2\}, 1)$   \\ 
 \hline
 $(\Delta, 2)$ & $(\{1\},1)$ & $(\{2\},1)$  \\
 \hline
\end{tabular}}
\end{center}
\section{Proof of the main theorem}\label{sec-proof}
We first discuss the case of Iwahori level structure. The proof of this part is different in spirit from the argument in \cite{HY21}, whereas the proof of the remaining parts is a direct adaptation of the proof scheme in loc. sit.

Basically, the restriction $\mu^\diamond \geq \nu(b)+2\rho^\vee$ enters in the proof of \cite{HY21}[Theorem 6.1] because the dimension for the affine Deligne-Lusztig variety associated to $w:=x t^{\mu}w_0 x^{-1}$ - the specific element chosen in loc. sit. during the proof - can be asserted to be equal to $d_w(b)$ by \cite{He21} only under that stated restriction. Note that it is then shown for this element, we have $d_w(b)=d_{\text{Adm}(\mu)}(b)$ - by which one concludes that $\dim X(\mu,b) = d_{\text{Adm}(\mu)}(b)$. In other words, for this particular choice of $w$, the associated affine Deligne-Lusztig variety has a known dimension by design, and also makes it to the top dimensional component of $X(\mu,b)$.

\begin{proof}[Proof of \cref{main-thm}(i)]
Instead of working with the element $w$ as above, let us consider the element $w':=w_0 t^{\mu-\text{wt}(w_0,1)}$. We claim that

(a) $w' \in \text{Adm}(\mu)$.

Indeed, proceeding as in the proof of \cite{Sad23}[Proposition 4.4] and utilizing the decomposition of $w_0$ in section 5 of loc. sit., we have $$t^{\mu - \text{wt}(w_0,1)} \leq t^{\mu}w_0, \text{~and hence~} w' \leq w_0t^{\mu}w_0.$$ 

This last inequality needs depth at least $2$ to ensure that $\mu - \text{wt}(w_0,1)$, as well the coweights appearing in the intermediate steps (arising from application of the proof technique in loc. sit.), are dominant so that the proof is valid here. Alternatively, we may apply \cite{Sch24}[Proposition 4.12]; either way, we have $w' \in \text{Adm}(\mu)$. 

Now, we claim  that 

(b) $d_{w'}(b) = d_{\text{Adm}(\mu)}(b)$. 

To that end, let us compute 
\begin{align*}
     d_{w'}(b) &=\frac{1}{2}\{\ell(w')+\ell(w_0) - \text{def}(b) - \langle 2\rho, \nu([b]) \rangle \} \\ &= \frac{1}{2}\{\ell(w_0)+\langle 2\rho, \mu - \text{wt}(w_0,1) \rangle +\ell(w_0)-\text{def}(b) - \langle 2\rho, \nu([b]) \rangle\} \\ &= \frac{1}{2}\{\langle 2\rho, \mu - \nu(b) \rangle -\text{def}(b)\} + \{\ell(w_0) - \langle \rho, \text{wt}(w_0,1) \rangle\}.
\end{align*}

Note that \cref{wt-d}[2] gives $\langle 2\rho, \text{wt}(w_0,1) \rangle = \ell(w_0) + d(w_0,1)$, but the explicit decomposition of $w_0$ shows that $d(w_0,1)=\ell_R(w_0)$. Combining, we have $\ell(w_0) - \langle \rho, \text{wt}(w_0,1) \rangle = \ell(w_0) - \frac{1}{2}(\ell(w_0) + \ell_R(w_0)) = \frac{1}{2}\{\ell(w_0) - \ell_R(w_0)\}$.

Finally, we have that

(c) $\text{dim} X_{w'}(b) = d_{w'}(b)$, whenever $X_{w'}(b) \neq \emptyset$.

By \cite[Theorem 1.2]{MV21} every element in the antidominant chamber is cordial. Therefore $w'$ is cordial, and then above claim follows from \cite[Corollary 3.17]{MV21}.

Finally, note further that the basic element in $B(\bG)$ satisfying $\kappa(b)=\kappa(w')$ is indeed the minimal element of $B(\bG)_{w'}$ by \cite{GHN16}[Theorem B]. Hence, $B(\bG)_{w'}=\{[b]: [b] \leq [b_{w'}]\}$. By \cite[Theorem 4.2]{He21pi}, we have $\nu([b_{w'}]) = \mu - \text{wt}(w_0,1)$. This in turn enforces the condition $\mu \geq \nu([b]) +\text{wt}(w_0,1)$ in order to ensure that $X_{w'}(b)\neq \emptyset$. We are done. 
\end{proof}

\begin{proof}[Proof of \cref{main-thm}(ii),(iii)]
Note that $\dim X(\mu,b)_J$ is bounded above by RHS of \cref{dim-formula} by \cref{d-adm} and \cref{min}. 

Let us now consider the element $w:=xt^{\mu-\text{wt}(x,xw_0)}w_0x^{-1}$, where $x\in ~^JW$ is as found in \cref{exceptional}. By \cref{left-part}, $w\in ~^J\wtd{W}$ and by \cite{Sch24}[Proposition 4.12], $w\in \Adm(\mu)$. Since $$\mu-\text{wt}(x,xw_0)-\nu(b)\geq \mu-\text{wt}(w_0,1)-\nu(b)\geq 2\rho^\vee,$$ we have $\dim X_w(b)=d_w(b)$ by \cite{He21pi}[Theorem 1.1]. By a straightforward computation, $d_w(b)=\<\rho, \underline \mu-\nu(b)\>-\frac{1}{2}\text{def}(b)+\frac{1}{2}\{\ell(w_0)-d(x,xw_0)\}$. We are now done by \cref{min}.

\end{proof} 
\begin{remark} Since we may use the explicit construction for $J\subset S$ in type $A_n, B_n, C_n$ - where we have $\text{wt}(x,xw_0)=0$ for the element $x$ found in \cref{sec-abc} - we may modify the proof of \cref{main-thm}[(ii)] accordingly so that it holds under the assumption $\mu\geq \nu(b)+2\rho^\vee$. 

One could ask if we could emulate the proof technique of \cref{main-thm}[(i)] for arbitrary parahoric levels and weaken the gap $\mu-\nu(b)$ further. Here one obstacle is that there may not exist any cordial element $w\in ~^J\Adm(\mu)$ for which $\dim X_w(b)=d_{^{J\Adm(\mu)}}(b)$. 

\end{remark}

\printbibliography[heading=bibintoc]

@article {BFP99,
    AUTHOR = {Brenti, Francesco and Fomin, Sergey and Postnikov, Alexander},
     TITLE = {Mixed {B}ruhat operators and {Y}ang-{B}axter equations for
              {W}eyl groups},
   JOURNAL = {Internat. Math. Res. Notices},
  FJOURNAL = {International Mathematics Research Notices},
      YEAR = {1999},
    NUMBER = {8},
     PAGES = {419--441},
     }

@article{He21pi, title={Cordial elements and dimensions of affine Deligne–Lusztig varieties}, volume={9},  journal={Forum of Mathematics, Pi}, author={He, Xuhua}, year={2021}, pages={e9}}

@article {Sad23,
    AUTHOR = {Sadhukhan, Arghya},
     TITLE = {Affine {D}eligne-{L}usztig varieties and quantum {B}ruhat
              graph},
   JOURNAL = {Math. Z.},
  FJOURNAL = {Mathematische Zeitschrift},
    VOLUME = {303},
      YEAR = {2023},
    NUMBER = {1},
     PAGES = {Paper No. 21, 34},

}

@article{Sch24, title={Affine Bruhat order and Demazure products}, volume={12}, journal={Forum of Mathematics, Sigma}, author={Schremmer, Felix}, year={2024}, pages={e53}}

@article {HR17,
    AUTHOR = {He, X. and Rapoport, M.},
     TITLE = {Stratifications in the reduction of {S}himura varieties},
   JOURNAL = {Manuscripta Math.},
  FJOURNAL = {Manuscripta Mathematica},
    VOLUME = {152},
      YEAR = {2017},
    NUMBER = {3-4},
     PAGES = {317--343},
     
}

@article{zib23,
  title={Dual involutions in finite Coxeter groups},
  author={Zibrowius, Marcus},
  journal={Communications in Algebra},
  pages={1--27},
  year={2023},
  publisher={Taylor \& Francis}
}

@misc{HN21,
  
  url = {https://arxiv.org/abs/2112.06376},
  
  author = {He, Xuhua and Nie, Sian},
  
  keywords = {Representation Theory (math.RT), Algebraic Geometry (math.AG), FOS: Mathematics, FOS: Mathematics, 20F55, 20G25},
  
  title = {Demazure product of the affine Weyl groups},
  
  publisher = {arXiv},
  
  year = {2021},
  
  copyright = {Creative Commons Attribution 4.0 International}
}

@article{GHKR06,
     author = {G\"ortz, Ulrich and Haines, Thomas J. and Kottwitz, Robert E. and Reuman, Daniel C.},
     title = {Dimensions of some affine {Deligne-Lusztig} varieties},
     journal = {Annales scientifiques de l'\'Ecole Normale Sup\'erieure},
     pages = {467--511},
     publisher = {Elsevier},
     volume = {Ser. 4, 39},
     number = {3},
     year = {2006},
     
}

@article{Vi06,
     author = {Viehmann, Eva},
     title = {The dimension of some affine {Deligne-Lusztig} varieties},
     journal = {Annales scientifiques de l'\'Ecole Normale Sup\'erieure},
     pages = {513--526},
     publisher = {Elsevier},
     volume = {Ser. 4, 39},
     number = {3},
     year = {2006},
     
}

@article {GH15,
    AUTHOR = {G\"ortz, Ulrich and He, Xuhua},
     TITLE = {Basic loci of {C}oxeter type in {S}himura varieties},
   JOURNAL = {Camb. J. Math.},
  FJOURNAL = {Cambridge Journal of Mathematics},
    VOLUME = {3},
      YEAR = {2015},
    NUMBER = {3},
     PAGES = {323--353},
      
}

@article {Ham15,
    AUTHOR = {Hamacher, Paul},
     TITLE = {The dimension of affine {D}eligne-{L}usztig varieties in the
              affine {G}rassmannian},
   JOURNAL = {Int. Math. Res. Not. IMRN},
  FJOURNAL = {International Mathematics Research Notices. IMRN},
      YEAR = {2015},
    NUMBER = {23},
     PAGES = {12804--12839},
      
}

@article{len15,
  title={A uniform model for Kirillov--Reshetikhin crystals I: Lifting the parabolic quantum Bruhat graph},
  author={Lenart, Cristian and Naito, Satoshi and Sagaki, Daisuke and Schilling, Anne and Shimozono, Mark},
  journal={International Mathematics Research Notices},
  volume={2015},
  number={7},
  pages={1848--1901},
  year={2015},
  publisher={Oxford University Press}
}

@article {He16a,
    AUTHOR = {He, Xuhua},
     TITLE = {Kottwitz-{R}apoport conjecture on unions of affine
              {D}eligne-{L}usztig varieties},
   JOURNAL = {Ann. Sci. \'{E}c. Norm. Sup\'{e}r. (4)},
  FJOURNAL = {Annales Scientifiques de l'\'{E}cole Normale Sup\'{e}rieure. Quatri\`eme
              S\'{e}rie},
    VOLUME = {49},
      YEAR = {2016},
    NUMBER = {5},
     PAGES = {1125--1141},
      
}

@article {Pos05,
    AUTHOR = {Postnikov, Alexander},
     TITLE = {Quantum {B}ruhat graph and {S}chubert polynomials},
   JOURNAL = {Proc. Amer. Math. Soc.},
  FJOURNAL = {Proceedings of the American Mathematical Society},
    VOLUME = {133},
      YEAR = {2005},
    NUMBER = {3},
     PAGES = {699--709},
     
}

@article {FGP97,
    AUTHOR = {Fomin, Sergey and Gelfand, Sergei and Postnikov, Alexander},
     TITLE = {Quantum {S}chubert polynomials},
   JOURNAL = {J. Amer. Math. Soc.},
  FJOURNAL = {Journal of the American Mathematical Society},
    VOLUME = {10},
      YEAR = {1997},
    NUMBER = {3},
     PAGES = {565--596},

}

@article {Zhu17,
    AUTHOR = {Zhu, Xinwen},
     TITLE = {Affine {G}rassmannians and the geometric {S}atake in mixed
              characteristic},
   JOURNAL = {Ann. of Math. (2)},
  FJOURNAL = {Annals of Mathematics. Second Series},
    VOLUME = {185},
      YEAR = {2017},
    NUMBER = {2},
     PAGES = {403--492},

}

@article {BS17,
    AUTHOR = {Bhatt, Bhargav and Scholze, Peter},
     TITLE = {Projectivity of the {W}itt vector affine {G}rassmannian},
   JOURNAL = {Invent. Math.},
  FJOURNAL = {Inventiones Mathematicae},
    VOLUME = {209},
      YEAR = {2017},
    NUMBER = {2},
     PAGES = {329--423},

}

@article {KR03,
    AUTHOR = {Kottwitz, R. and Rapoport, M.},
     TITLE = {On the existence of {$F$}-crystals},
   JOURNAL = {Comment. Math. Helv.},
  FJOURNAL = {Commentarii Mathematici Helvetici},
    VOLUME = {78},
      YEAR = {2003},
    NUMBER = {1},
     PAGES = {153--184},
}

@incollection {Rap05,
    AUTHOR = {Rapoport, Michael},
     TITLE = {A guide to the reduction modulo {$p$} of {S}himura varieties},
      NOTE = {Automorphic forms. I},
   JOURNAL = {Ast\'{e}risque},
  FJOURNAL = {Ast\'{e}risque},
    NUMBER = {298},
      YEAR = {2005},
     PAGES = {271--318},

}

@incollection {He16,
    AUTHOR = {He, Xuhua},
     TITLE = {Hecke algebras and {$p$}-adic groups},
 BOOKTITLE = {Current developments in mathematics 2015},
     PAGES = {73--135},
 PUBLISHER = {Int. Press, Somerville, MA},
      YEAR = {2016},
  
}

@article {He14,
    AUTHOR = {He, Xuhua},
     TITLE = {Geometric and homological properties of affine
              {D}eligne-{L}usztig varieties},
   JOURNAL = {Ann. of Math. (2)},
  FJOURNAL = {Annals of Mathematics. Second Series},
    VOLUME = {179},
      YEAR = {2014},
    NUMBER = {1},
     PAGES = {367--404},
      
}

@misc{He21,
  
  url = {https://arxiv.org/abs/2107.14461},
  
  author = {He, Xuhua},
  
  keywords = {Algebraic Geometry (math.AG), Representation Theory (math.RT), FOS: Mathematics, FOS: Mathematics, 20G25, 11G25, 20F55},
  
  title = {Affine Deligne-Lusztig varieties associated with generic Newton points},
  
  publisher = {arXiv},
  
  year = {2021},
  
  copyright = {Creative Commons Attribution 4.0 International}
}

@article {MV21,
    AUTHOR = {Mili\'{c}evi\'{c}, Elizabeth and Viehmann, Eva},
     TITLE = {Generic {N}ewton points and the {N}ewton poset in
              {I}wahori-double cosets},
   JOURNAL = {Forum Math. Sigma},
  FJOURNAL = {Forum of Mathematics. Sigma},
    VOLUME = {8},
      YEAR = {2020},
     PAGES = {Paper No. e50, 18},
  
}

@article {HL15,
    AUTHOR = {He, Xuhua and Lam, Thomas},
     TITLE = {Projected {R}ichardson varieties and affine {S}chubert
              varieties},
   JOURNAL = {Ann. Inst. Fourier (Grenoble)},
  FJOURNAL = {Universit\'e{} de Grenoble. Annales de l'Institut Fourier},
    VOLUME = {65},
      YEAR = {2015},
    NUMBER = {6},
     PAGES = {2385--2412},

}

@article {GHN16,
    AUTHOR = {G\"ortz, Ulrich and He, Xuhua and Nie, Sian},
     TITLE = {Fully {H}odge-{N}ewton decomposable {S}himura varieties},
   JOURNAL = {Peking Math. J.},
  FJOURNAL = {Peking Mathematical Journal},
    VOLUME = {2},
      YEAR = {2019},
    NUMBER = {2},
     PAGES = {99--154},
}

@article {HY21,
    AUTHOR = {He, Xuhua and Yu, Qingchao},
     TITLE = {Dimension formula for the affine {D}eligne-{L}usztig variety
              {$X(\mu, b)$}},
   JOURNAL = {Math. Ann.},
  FJOURNAL = {Mathematische Annalen},
    VOLUME = {379},
      YEAR = {2021},
    NUMBER = {3-4},
     PAGES = {1747--1765},
      
}

@article {Ko97,
    AUTHOR = {Kottwitz, Robert E.},
     TITLE = {Isocrystals with additional structure. {II}},
   JOURNAL = {Compositio Math.},
  FJOURNAL = {Compositio Mathematica},
    VOLUME = {109},
      YEAR = {1997},
    NUMBER = {3},
     PAGES = {255--339},
      
}

@inproceedings {Vi18,
    AUTHOR = {Viehmann, Eva},
     TITLE = {Moduli spaces of local {${\bf G}$}-shtukas},
 BOOKTITLE = {Proceedings of the {I}nternational {C}ongress of
              {M}athematicians---{R}io de {J}aneiro 2018. {V}ol. {II}.
              {I}nvited lectures},
     PAGES = {1425--1445},
 PUBLISHER = {World Sci. Publ., Hackensack, NJ},
      YEAR = {2018},
   
}

@article {RV14,
    AUTHOR = {Rapoport, Michael and Viehmann, Eva},
     TITLE = {Towards a theory of local {S}himura varieties},
   JOURNAL = {M\"{u}nster J. Math.},
  FJOURNAL = {M\"{u}nster Journal of Mathematics},
    VOLUME = {7},
      YEAR = {2014},
    NUMBER = {1},
     PAGES = {273--326},
      
}

@book {Bou,
    AUTHOR = {Bourbaki, Nicolas},
     TITLE = {Lie groups and {L}ie algebras. {C}hapters 4--6},
    SERIES = {Elements of Mathematics (Berlin)},
      NOTE = {Translated from the 1968 French original by Andrew Pressley},
 PUBLISHER = {Springer-Verlag, Berlin},
      YEAR = {2002},
     PAGES = {xii+300},
 
}

@article {Ko85,
    AUTHOR = {Kottwitz, Robert E.},
     TITLE = {Isocrystals with additional structure},
   JOURNAL = {Compositio Math.},
  FJOURNAL = {Compositio Mathematica},
    VOLUME = {56},
      YEAR = {1985},
    NUMBER = {2},
     PAGES = {201--220},
     
}

\end{document}